\DeclareMathOperator{\tr}{tr}
\DeclareMathOperator{\dist}{dist}
\DeclareMathOperator{\graph}{graph}
\def\R{{\mathbb{R}}}
\def\N{{\mathbb{N}}}
\def\S{{\mathbb{S}}}
\def\theta{{\vartheta}}
\def\phi{{\varphi}}
\def\epsilon{{\varepsilon}}
\newcommand{\A}[1]{\ifthenelse{#1 = 2}{\lvert A\rvert^{#1}}{\tr A^{#1}}}
\mathchardef\ordinarycolon\mathcode`\:
\newtheorem{theorem}{Theorem}[section]
\newtheorem{lemma}[theorem]{Lemma}
\newtheorem{corollary}[theorem]{Corollary}
\theoremstyle{definition}
\newtheorem{definition}[theorem]{Definition}
\newtheorem{remark}[theorem]{Remark}
\numberwithin{equation}{section}
\begin{document}
\title[Entire graphs evolving by powers of the mean curvature]{Existence of convex entire graphs evolving by powers of the mean curvature}

\author{Martin Franzen}
\address{Martin Franzen, Universit\"at Konstanz,
   Universit\"atsstrasse 10, 78464 Konstanz, Germany}
\curraddr{}
\def\ukaddress{@uni-konstanz.de}
\email{Martin.Franzen\ukaddress}
\thanks{We would like thank Oliver Schn\"urer for support.\\ The author is a member of the DFG priority program SPP 1489.}

\subjclass[2000]{53C44}

\date{December 19, 2011.}


\keywords{}

\begin{abstract}
We study convex entire graphs evolving with normal velocity equal to a positive power of the mean curvature. Under mild assumptions we prove longtime existence.	
\end{abstract}

\maketitle

\section{Introduction}

We consider the geometric evolution equation
\begin{align}\label{eq:mean}
\frac{d}{dt} X = -H^\rho\nu,
\end{align}
where $\rho > 0$. For $\rho = 1$ this is the well-known mean curvature flow. We show longtime existence for strictly convex graphical solutions fulfilling the $\nu$-condition.\par

Our main theorem is
\begin{theorem}\label{th:existence}
Let $\rho > 0$. Let $u_0 \in C_{loc}^{2,\beta}\left(\R^n\right)$ be strictly convex for some $0 < \beta < 1$. Assume that for every $\epsilon > 0$ there exists $r > 0$ such that for points $p, q \in \graph u_0$,  $\vert \nu(p) - \nu(q) \vert < \epsilon$ if $\vert p-q \vert < 1$ and $\vert p \vert,\,\vert q \vert \geq r$ ($\nu$-condition). Then there exists a convex, strictly mean convex solution
\begin{align*}
u \in C_{loc}^{2;1}\left(\R^n \times (0,\infty)\right) \cap C_{loc}^0\left(\R^n \times [0,\infty)\right)
\end{align*}
to \eqref{eq:graph} --- the graphical formulation of \eqref{eq:mean}.
\end{theorem}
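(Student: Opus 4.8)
The plan is to obtain $u$ by an exhaustion procedure and to push interior estimates to the limit. For each $R>0$ let $u_R$ denote the solution of the graphical equation \eqref{eq:graph} on $B_R(0)\times(0,\infty)$ with the stationary Dirichlet data $u_R=u_0$ on $\partial B_R\times[0,\infty)$ and $u_R(\cdot,0)=u_0$. For convex $u$ the spatial operator in \eqref{eq:graph} is, up to a positive factor depending only on $Du$, the $\rho$-th power of a quantity linear in $D^2u$, hence the operator is concave in $D^2u$ for $0<\rho\le1$ and convex for $\rho\ge1$; since $u_0$ is \emph{strictly} convex, $\graph u_0$ is strictly mean convex and the operator is uniformly parabolic near $u_0$ on each $B_R$, so short--time existence, uniqueness and (for $t>0$) smoothness of $u_R$ follow from standard fully nonlinear parabolic theory, and the Evans--Krylov and Schauder machinery is available.

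Next I would collect the properties of the $u_R$. Convexity is preserved --- apply the maximum principle to the evolving $D^2u_R$ (equivalently to the second fundamental form), using that the boundary datum $u_0$ is convex --- so $D^2u_R(\cdot,t)\ge0$, the right--hand side of \eqref{eq:graph} is $\ge0$, and therefore $u_R$ is nondecreasing in $t$; in particular $u_0\le u_R(\cdot,t)$, and by convexity $\sup_{\bar B_R}u_R(\cdot,t)=\sup_{\partial B_R}u_0$. The strong maximum principle upgrades convexity to $H>0$ for $t>0$. Standard barrier constructions at $\partial B_R$ (using that $B_R$ and $u_0$ are convex) give boundary gradient and curvature bounds, so each $u_R$ exists for all $t\in[0,\infty)$.

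The core of the argument is a family of a priori estimates on a fixed cylinder $B_{R_0}(0)\times[0,T]$ that are uniform in $R\ge2R_0$. (i) A localized gradient estimate: along the flow the gradient function $v_R=\sqrt{1+\abs{Du_R}^2}$ satisfies a parabolic differential inequality in which the curvature terms carry a favorable sign, so an Ecker--Huisken--type maximum principle with a spatial cutoff supported in $B_{2R_0}$ gives $\sup_{B_{R_0}\times[0,T]}\abs{Du_R}\le C(R_0,T,u_0)$; this is finite because $u_0\in C_{loc}^{2,\beta}(\R^n)$ and, crucially, it does not see the boundary $\partial B_R$, hence is independent of $R$. (ii) A localized curvature estimate: with the gradient controlled, \eqref{eq:graph} is uniformly parabolic on $B_{R_0}\times[\tau,T]$ (strict mean convexity, being preserved on compacta, keeps $H$ bounded away from $0$), and the corresponding interior estimate gives $\sup_{B_{R_0/2}\times[\tau,T]}\abs{A_{u_R}}\le C(R_0,\tau,T,u_0)$; near $t=0$ the $C_{loc}^{2,\beta}$--regularity of $u_0$ yields an interior Schauder bound, all uniform in $R$. (iii) A height bound: integrating \eqref{eq:graph} in time and using (i) and (ii) gives $u_0\le u_R(\cdot,t)\le u_0+C(R_0,T,u_0)\,t$ on $B_{R_0}$, again uniform in $R$. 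Evans--Krylov together with Schauder bootstrapping then promote these to uniform interior $C^{k,\alpha}$ bounds on $B_{R_0/4}\times[\tau,T]$ for every $k$. The $\nu$--condition is precisely what makes (i) and (ii) genuinely uniform in $R$: it gives uniform control of the geometry of $\graph u_0$ near $\partial B_R$, i.e.\ near infinity --- at scale $1$ the Gauss map oscillates by less than $\epsilon$, so there $\graph u_0$ is almost a hyperplane and almost flat --- which prevents the approximating solutions from degenerating and rules out curvature concentrating at infinity. I expect securing these uniform estimates near infinity to be the main obstacle, since for $\rho\neq1$ the evolution equation of the gradient function is less favorable than for mean curvature flow.

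Finally, a diagonal Arzel\`a--Ascoli argument extracts a subsequence $u_{R_j}$ converging in $C_{loc}^{2;1}(\R^n\times(0,\infty))$ and in $C_{loc}^0(\R^n\times[0,\infty))$ --- the latter using (iii) together with the equicontinuity up to $t=0$ furnished by the speed bound and (i) --- to a limit $u$. By construction $u$ solves \eqref{eq:graph} classically for $t>0$, satisfies $u(\cdot,0)=u_0$, is convex as a locally uniform limit of convex functions, and is strictly mean convex because $H\ge0$ passes to the limit and the strong maximum principle forces $H>0$ for $t>0$; interior bootstrapping gives $u\in C^\infty(\R^n\times(0,\infty))$, so in particular $u\in C_{loc}^{2;1}(\R^n\times(0,\infty))\cap C_{loc}^0(\R^n\times[0,\infty))$, as asserted.
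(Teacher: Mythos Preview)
Your outline has genuine gaps at the two places you yourself flag as delicate. For the interior gradient and curvature estimates, the Ecker--Huisken cutoff argument relies on the favorable sign structure in the evolution of $v$ that is specific to $\rho=1$; for general $\rho>0$ the corresponding inequality does not close, and no localized $|Du|$-bound of the type you sketch is known. The paper's substitute is quite different and explains precisely how the $\nu$-condition enters: for each point far from the origin one chooses a \emph{rotated} coordinate system in which the evolving surfaces are graphs with \emph{small} gradient $|D\tilde u|\le G$ (the $\nu$-condition is exactly what makes such coordinates exist, uniformly and for all $t\in[0,T]$), and in those coordinates one bounds the test function $t^\rho(-\eta_\alpha X^\alpha)^\rho H^\rho e^{\beta v\rho}$ by a maximum-principle computation in which the \emph{smallness} of $G$ --- not merely a bound --- is needed to make the leading coefficient positive. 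This gives $C^2$-bounds on a large sphere $\partial B_r$, and convexity together with Schulze's results for closed hypersurfaces handles the interior of $B_r$. Relatedly, the paper approximates $\graph u_0$ by \emph{closed} strictly convex hypersurfaces that coincide with $\graph u_0$ in $\{x^{n+1}<k\}$ and invokes Schulze's existence theorem, rather than solving Dirichlet problems on balls; your Dirichlet scheme would require separate long-time existence and boundary-curvature arguments for the fully nonlinear problem on $B_R$, which you do not supply.

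The second gap is the lower bound on $H$. You write that strict mean convexity ``being preserved on compacta, keeps $H$ bounded away from $0$'', but that is the desired conclusion, not an argument: without a quantitative lower bound uniform in the approximation parameter the equation may degenerate and neither Krylov--Safonov nor Schauder applies. The paper obtains this via Andrews' Harnack inequality for compact strictly convex solutions of \eqref{eq:mean}, combined with a local barrier that forces a definite displacement in a fixed short time and hence a lower bound on $\dot u$ at some earlier instant; the Harnack inequality then propagates this forward. This is another reason the closed-hypersurface approximation is convenient, since the Harnack inequality is formulated for compact solutions and the resulting bound survives the limit.
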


In \cite{gh:flow}, Huisken proved existence for closed, convex hypersurfaces for $\rho = 1$. These surfaces stay convex and contract to a point in finite time. Schulze \cite{fs:evolution} generalized these results to $\rho > 0$.
In the graphical setting Ecker and Huisken studied existence and asymptotic behavior for $\rho = 1$ \cite{eh:mean}.\par

Our proof closely follows \cite{su:gauss} --- a paper with the working title \emph{Gau\ss\;curvature flows near cones} by Schn\"urer and Urbas. In chapter \ref{sc:local}, however, we study a different test function to show local $C^2$-estimates.\par

The rest of the paper is organized as follows. We first state geometric prelimaries and some notation. Then we list the evolution equations needed in Section \ref{sc:local}. There we apply the maximum principle to get upper velocity bounds which imply local $C^2$-estimates. In Section \ref{sc:coordinate} we explore the $\nu$-condition and a special family of coordinate systems suitable for the local $C^2$-estimates. For the lower bounds we use a Harnack inequality, see Section \ref{sc:lower}. Eventually we prove longtime existence.

\section{Geometric Preliminaries}\label{sc:geometric}

\begin{theorem}[\cite{su:gauss}]\label{th:graph}
  Let $\Omega \in \R^{n+1}$ be an open convex unbounded set, $\emptyset \neq \Omega \neq \R^{n+1}$. Assume that $\partial\Omega \in C_{loc}^2$ and that the principal curvatures of $\partial\Omega$ are everywhere strictly positive. Then there exists a rotation R of $\R^{n+1}$, an open convex set $U \subset \R^n$ and $u \in C_{loc}^2(U)$ such that $\graph u\vert_U = R(\partial\Omega)$. 
\end{theorem}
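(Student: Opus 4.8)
The plan is to find a single direction $e\in\S^n$ for which the inner unit normal $\nu$ of $\partial\Omega$ satisfies $\langle\nu(p),e\rangle>0$ at every point $p\in\partial\Omega$; the rotation $R$ carrying $e$ to $e_{n+1}$ then turns $\partial\Omega$ into a graph, and the regularity and convexity of that graph follow from the implicit function theorem and the convexity of $\Omega$. So the whole theorem reduces to a statement about the Gauss map: its image must lie in an open hemisphere.

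First I would record the elementary consequences of the hypotheses. Since $\partial\Omega\in C^2_{loc}$ has everywhere strictly positive principal curvatures with respect to the inner normal, near any $p\in\partial\Omega$ the hypersurface is a strictly convex $C^2$ graph over its tangent hyperplane, lying on the $\Omega$-side and meeting the tangent hyperplane only at $p$ locally; in particular $\Omega$ contains no full line, so its recession cone
\[
C=\{\,v\in\R^{n+1}\;:\;x+tv\in\ol\Omega\ \text{ for all }x\in\ol\Omega,\ t\ge0\,\}
\]
is a closed convex cone which is pointed and, because $\Omega$ is unbounded, contains a nonzero vector. Next, for $p\in\partial\Omega$ the supporting half-space $\{z:\langle z-p,\nu(p)\rangle\ge0\}\supseteq\ol\Omega$, applied to the points $p+tv\in\ol\Omega$ with $v\in C$, $t\ge0$, forces $\langle v,\nu(p)\rangle\ge0$. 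Hence every inner normal of $\partial\Omega$ lies in the dual cone $C^{\ast}=\{n:\langle v,n\rangle\ge0\ \forall v\in C\}$.

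Now pick any unit vector $e\in C$. Then $\langle\nu(p),e\rangle\ge0$ for all $p\in\partial\Omega$ by the previous paragraph. The crucial step is to upgrade this to a strict inequality, and I expect this to be the main obstacle — it is exactly where strict convexity (as opposed to mere convexity) is used. Because no principal curvature vanishes, the Weingarten map is everywhere invertible, so $\nu\colon\partial\Omega\to\S^n$ is a local $C^1$-diffeomorphism and $N:=\nu(\partial\Omega)$ is \emph{open} in $\S^n$. An open subset of the closed hemisphere $\{n:\langle n,e\rangle\ge0\}$ cannot meet the equator, hence $N\subseteq\{n:\langle n,e\rangle>0\}$, i.e.\ $\langle\nu(p),e\rangle>0$ for every $p\in\partial\Omega$.

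Finally, let $R$ be a rotation with $Re=e_{n+1}$ and replace $\Omega$ by $R\Omega$; the recession cone is still pointed, $e_{n+1}$ lies in it, and the inner normal of $\partial\Omega$ now has everywhere positive last component. For fixed $x\in\R^n$ the set $\ol\Omega\cap(\{x\}\times\R)$ is closed and convex; it cannot be bounded above, since its top point would be a boundary point whose inner normal has nonpositive last component, and it cannot be all of $\{x\}\times\R$, since that would put $\pm e_{n+1}$ in the (pointed) recession cone. Thus this intersection is either empty or a half-line $[u(x),\infty)$, which gives $\Omega=\{(x,s):x\in U,\ s>u(x)\}$ with $U$ the open convex vertical projection of $\Omega$ and $u$ convex. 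Since the normal is nowhere horizontal, the vertical projection restricts to a local diffeomorphism of $\partial\Omega$; combined with the line-wise description this yields $\partial\Omega=\graph u|_U$, and applying the implicit function theorem to $\partial\Omega\in C^2_{loc}$ shows $u\in C^2_{loc}(U)$. Undoing nothing — the rotation $R$ is part of the statement — completes the proof.
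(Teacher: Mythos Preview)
Your proof is correct and follows essentially the same strategy as the paper: pick a direction of unboundedness (you formalize this via the recession cone, the paper extracts it from a sequence $y_k\in\Omega$ with $|y_k|\to\infty$ and a subsequential limit of $y_k/|y_k|$), show the inner normal has strictly positive component in that direction, and read off the graph structure from the resulting transversality. The only noteworthy variation is in the strict-inequality step: you argue that the Gauss image is open in $\S^n$ and hence cannot touch the equator of the closed hemisphere it lies in, whereas the paper argues directly that if the outer normal at some $x$ had $\nu^{n+1}(x)\ge 0$ then strict convexity of $\partial\Omega$ would force the far-away points $y_k$ out of the supporting half-space at $x$ --- both arguments use the positivity of the principal curvatures, just packaged differently.
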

\begin{proof}
 We may assume that $0 \in \Omega$. For all $k \in \N$ we find points $y_k \in \Omega$ such that $\vert y_k \vert \geq k$. We may assume that $\frac{y_k}{\vert y_k \vert}$ converges to $z \in \S^n$ and, after a rotation, that $z = e_{n+1} = (0,\ldots,0,1)$. Let $\pi$ denote the orthogonal projection of $\R^{n+1}$ onto $\R^n \times \{0\}$. We identify $\R^n \times \{0\}$ and $\R^n$. Define $U := \pi(\Omega)$.\par
 Let $x \in \partial\Omega$ and $\nu(x)$ be the outer unit normal to $\Omega$ at $x$. Then $\nu^{n+1}(x) < 0$ for otherwise the strict convexity of $\partial\Omega$ contradicts $y_k \in \Omega$ for $k$ large enough. Hence every line $\{a\} \times \R$, $ a\in\R^n$, intersects $\partial\Omega$ at most once. None of these lines is completely contained in $\Omega$ as $\Omega \neq \R^{n+1}$ and $\nu^{n+1}(x) < 0$ for any $x \in \partial\Omega$. As $\nu^{n+1}(x) < 0$ for $x \in \partial\Omega$, $x+e_{n+1} \in \Omega$ and hence $\pi(x) \in U$. We conclude that $\partial\Omega$ can be written as $\graph u\vert_U$ with $u \in C_{loc}^2(U)$.
\end{proof}

\section{Notation}\label{sc:notation}

Let $\Omega \subset \R^n$. A function $u: \Omega \to \R$ is said to be convex if its epigraph $\{(x,y) \in \R^n \times \R : y > u(x)\}$ is a convex set. We say that a convex function $u: \Omega \to \R$ is strictly convex, if its Hessian $D^2u = (u_{ij})$ has postive eigenvalues. A function $u: \Omega \times [0,\infty)$ is said to be (strictly) convex, if $u(\cdot,t)$ is (strictly) convex for each $t$.\par
We say that a function $u$ solving a parabolic equation is in in $C^2$, if $u(\cdot,t)$ is in $C^2$ for every $t$. The space $C^{2;1}$ denotes those functions, where in addition all first derivatives are continuous.\par
We use Greek indices running from $1$ to $n+1$ for tensors in $(n+1)$-dimensional Euclidean space. It should not cause any problems that we also use $H^\rho$ to denote the normal velocity and that $\beta$ also appears in section \ref{sc:local}. Latin indices refer to quantities on hyperspaces and run from $1$ to $n$. The Einstein summation convention is used to sum over pairs of upper and lower indices unless we write explicit sums. We raise and lower indices of tensors with the respective metrics or its inverses. A dot indicates  a time derivative, e.g. $\dot{u}$.\par
In Euclidean space we will only use coordinate systems which differ from the standard coordinate system by a rigid motion. Therefore its metric is given by $(\bar{g}_{\alpha\beta}) = \text{diag}(1,\ldots,1)$ and the Codazzi equations imply that the first covariant derivative of the second fundamental form is completely symmetric. We use $X=X(x,t)$ to denote the embedding vector of a manifold $M_t$ into $\R^{n+1}$ and $\frac{d}{dt}X = \dot{X}$ for its total time derivative. It is convenient to identify $M_t$ and its embedding in $\R^{n+1}$. An embedding induces a metric $\left(g_{ij}\right)$.\par
We will consider hypersurfaces $M$ that can be represented as $\graph u$ for some function $u: \R^n \to \R$. Let us use $u_i$ to denote partial derivatives of $u$. Using the Kronecker delta $\delta\,\ddot{}$, we have $u_i\delta^{ij}u_j \equiv u_i u^i = \vert Du \vert^2$. The induced metric $\left(g_{ij}\right)$ of $\graph u$ and its inverse $\left(g^{ij}\right)$ are given by
\begin{align*}
g_{ij} = \delta_{ij} + u_i u_j \qquad \text{and} \qquad g^{ij} = \delta^{ij}  - \frac{u^i u^j}{1 + \vert Du \vert^2},
\end{align*}
respectively.\par
We choose $\left(\nu^\alpha\right)$ to be the downwards directed unit normal vector to $M_t$. If $M_t$ is locally represented as $\graph u$, we get
\begin{eqnarray*}
\left(\nu^\alpha\right) = \frac{(Du,-1)}{\sqrt{1+\vert Du \vert^2}}.
\end{eqnarray*}
The embedding also induces a second fundamental form $\left(h_{ij}\right)$. In the graphical setting it is given in terms of partial derivatives by $h_{ij} = u_{ij}/\sqrt{1+\vert Du \vert^2}$. We denote its inverse by $\tilde{h}^{ij}$.\par
We write Latin indices, sometimes preceded by semicolons, e.g. $h_{ij;k}$, to indicate covariant differentiation with respect to the induced metric. Setting $X^\alpha_{;ij} := X^\alpha_{,ij} - \Gamma^k_{ij}X^\alpha_k$, where a comma indicated partial derivatives, the Gau\ss\;formula is
\begin{eqnarray*}
X^\alpha_{;ij} = -h_{ij}\nu^\alpha
\end{eqnarray*}
and the Weingarten equation is
\begin{eqnarray*}
\nu^\alpha_{;i} = h^k_i X^\alpha_k \equiv h_{il} g^{lk} X^\alpha_k.
\end{eqnarray*}
\par
The eigenvalues of $h_{ij}$ with respect to $g_{ij}$ are the principal curvatures of the hypersurface and are denoted by $\lambda_1,\ldots,\lambda_n$. A hypersurface is called convex, if it is contained in the boundary of a convex body. It is called strictly convex, if it is convex and all principal curvatures are strictly positive.\par
The mean curvature is the sum of the principal curvatures
\begin{eqnarray*}
H = \lambda_1 + \ldots + \lambda_n = h_{ij}g^{ij}.
\end{eqnarray*}
For graphical solutions, the initial value problem for the mean curvature flows \eqref{eq:mean} can be written as follows
\begin{eqnarray}\label{eq:graph}
\begin{cases}
  \dot{u} = \sqrt{1 + \vert Du \vert^2} \left(\text{div}\left(\frac{Du}{\sqrt{1+\vert Du \vert^2}}\right)\right)^\rho  & \text{ in } \R^n \times (0,\infty),\\
 u(\cdot,0) = u_0 & \text{ in } \R^n.
\end{cases}
\end{eqnarray}
It is a parabolic equation if and only if $u$ is strictly convex.\par
Let us also define the Gau\ss\;curvature $K = \frac{\det h_{ij}}{\det g_{ij}} = \lambda_1\cdots\lambda_n$. We define $F^{ij} := \frac{\partial F}{\partial h_{ij}} = \rho H^{\rho-1}g^{ij}$.\par
For tensors $A$ and $B$, $A_{ij} \geq B_{ij}$ means that $\left(A_{ij} - B_{ij}\right)$ is positive definite. Finally, we use $c$ to denote universal, estimated constants.\par
In order to compute evolution equations, we use the Gau\ss\;equation and the Ricci identity for the second fundamental form
\begin{eqnarray*}
\begin{array}{l}
 R_{ijkl} = h_{ik}h_{jl} - h_{il}h_{jk},\\
 h_{ik;lj} = h_{ik;jl} + h^a_k R_{ailj} + h^a_i R_{aklj}.
\end{array}
\end{eqnarray*}

\section{Evolution Equations}\label{sc:evolution}

Recall, e.g. \cite{cg:closed,cg:curvature,gh:flow,os:surfaces}, that for a hypersurface moving according to
\begin{eqnarray*}
\frac{d}{dt} X^\alpha = -H^\rho \nu^\alpha \equiv -F\nu^\alpha,
\end{eqnarray*}
we have
\begin{equation*}
 \frac{d}{dt} X^\alpha-F^{ij} X^\alpha_{;ij} = \left(F^{ij} h_{ij}-F\right)\nu^\alpha,
\end{equation*}
\begin{equation*}
 \frac{d}{dt} F - F^{ij} F_{;ij} =FF^{ij}h^k_i h_{kj}.
\end{equation*}
We define $\eta_\alpha := (0,\ldots,0,1)$, $\tilde{v} := -\eta_\alpha \nu^\alpha$, $v := \tilde{v}^{-1}$ and obtain the following evolution equation
\begin{equation*}
\frac{d}{dt} v - F^{ij}  v_{;ij} = -v F^{ij} h^k_i h_{kj} - 2 \frac{1}{v} F^{ij} v_i v_j.
\end{equation*}

\section{Local $C^2$-Estimates}\label{sc:local}

The idea is to use special coordinate systems to prove local a priori estimates on some sphere $\partial B_r(0)$. Standard techniques then imply a priori estimates in $B_r(0)$.\par
We obtain local a priori estimates similar to \cite{bs:entire,gt:elliptic,ap:the}. For hypersurfaces which can be represented as a graph with small gradient, we get the following local $C^2$-estimates.

\begin{theorem}
Let $\rho > 0$, $T > 0$ and $\left(M_t\right)_{t\in[0,T]}$ be a family of complete strictly convex $C^2$-hypersurfaces solving \eqref{eq:mean}. Pick a coordinate system such that each $M^-_t := M_t \cap \{x^{n+1} \leq 0\}$ can be written as $\graph u(\cdot,t)$ in some domain with $\vert  Du(\cdot,t) \vert \leq G$ in $M^-_t$. If $M^-_0$ is bounded, $\beta = \beta(n,\rho) \geq 1$ is large and $G = G(n,\rho,\beta) > 0$ is small enough, then
 \begin{eqnarray*}
\left(-\eta_\alpha X^\alpha\right)^\rho F e^{\beta v \rho}
\end{eqnarray*}
is bounded in $M^-_t$ by the maximum of $c = c(n,\rho,\beta,G,\max -\eta_\alpha X^\alpha)$ and its value at $t = 0$.
\end{theorem}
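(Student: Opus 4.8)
The plan is to run the parabolic maximum principle for the logarithm of the test function. Write $z:=-\eta_\alpha X^\alpha$; on $M^-_t$ we have $z\ge 0$, with $z=0$ exactly on $M_t\cap\{x^{n+1}=0\}$ and $z\le\max(-\eta_\alpha X^\alpha)$, while $\tilde v=-\eta_\alpha\nu^\alpha\in(0,1]$ and $v=\tilde v^{-1}$, so the gradient bound gives $1\le v\le\sqrt{1+G^2}$. The crucial structural simplification is that $F^{ij}=\rho H^{\rho-1}g^{ij}$ is a multiple of the metric, hence $F^{ij}h_{ij}=\rho F$, $F^{ij}h^k_ih_{kj}=\rho H^{\rho-1}|A|^2$, and all $F^{ij}$-contractions of gradients are $\rho H^{\rho-1}$ times the corresponding induced norms. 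Set
\[
\varphi:=\rho\log z+\log F+\beta\rho v=\log\Big((-\eta_\alpha X^\alpha)^\rho\,F\,e^{\beta v\rho}\Big),\qquad \mathcal L\psi:=\dot\psi-F^{ij}\psi_{;ij}.
\]

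First I would compute $\mathcal L\varphi$ from the evolution equations of Section~\ref{sc:evolution}. Contracting $\mathcal L X^\alpha=(F^{ij}h_{ij}-F)\nu^\alpha=(\rho-1)F\nu^\alpha$ with $-\eta_\alpha$ gives $\mathcal L z=(\rho-1)F\tilde v$; from $\mathcal L F=F\cdot F^{ij}h^k_ih_{kj}$ we get $\mathcal L\log F=\rho H^{\rho-1}|A|^2+F^{-2}F^{ij}F_iF_j$; and $\mathcal L v$ is listed directly. Adding $\rho\,\mathcal L\log z$, $\mathcal L\log F$ and $\beta\rho\,\mathcal L v$,
\[
\mathcal L\varphi=\frac{\rho(\rho-1)F\tilde v}{z}+\frac{\rho}{z^2}F^{ij}z_iz_j+\rho H^{\rho-1}|A|^2\big(1-\beta\rho v\big)+\frac{1}{F^2}F^{ij}F_iF_j-\frac{2\beta\rho}{v}F^{ij}v_iv_j.
\]
Here $\rho H^{\rho-1}|A|^2(1-\beta\rho v)$ is the engine: since $v\ge 1$, choosing $\beta=\beta(n,\rho)$ large gives $\beta\rho v\ge 2$, so this term is at most $-\tfrac12\beta\rho^2 v\,H^{\rho-1}|A|^2\le-\tfrac{\beta\rho^2}{2n}H^{\rho+1}$, using $|A|^2\ge H^2/n$. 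Everything else must be absorbed into it.

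Next I would evaluate at a point where $\varphi$ — equivalently the test function — attains a maximum over $\overline{M^-_t}\times[0,T]$; this maximum cannot lie on the face $\{x^{n+1}=0\}$ because of the factor $z^\rho$, and it is attained since $M^-_0$ is bounded and the surfaces shrink so that $\bigcup_{t\in[0,T]}M^-_t$ stays bounded. If the maximum value exceeds $\max\{c,\sup_{t=0}(\cdot)\}$ and occurs at some $t>0$, then $\mathcal L\varphi\ge 0$ there, while $\varphi_i=0$ yields $F^{-1}F_i=-\rho z^{-1}z_i-\beta\rho v_i$, hence
\[
F^{-2}F^{ij}F_iF_j=\frac{\rho^2}{z^2}F^{ij}z_iz_j+\frac{2\beta\rho^2}{z}F^{ij}z_iv_j+\beta^2\rho^2\,F^{ij}v_iv_j.
\]
The gradient terms are controlled by the geometry of the $\nu$-type coordinates: $g^{ij}z_iz_j=|\eta^{\mathrm{tan}}|^2=1-\tilde v^2<G^2$, and the Weingarten equation gives $v_i=v^2h^k_i\eta_k$, so $g^{ij}v_iv_j\le v^4|A|^2G^2$ and $F^{ij}v_iv_j\le c(n,\rho)\,G^2H^{\rho-1}|A|^2$; the mixed term is handled by Cauchy--Schwarz with respect to $F^{ij}$. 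Consequently every positive contribution to $\mathcal L\varphi$ is at most a constant multiple of $H^{\rho-1}|A|^2$ whose prefactor is either $c(n,\rho)\beta^2G^2$ — made small by choosing $G=G(n,\rho,\beta)$ small — or, after using that a value of the test function above the threshold forces $zH$ to be large (because $e^{\beta v\rho}$ is bounded), of the form $c(n,\rho)/(zH)$ or $c(n,\rho)G^2/(zH)^2$, hence at most $\tfrac{\beta\rho^2}{4n}H^{\rho+1}$ once $\beta=\beta(n,\rho)$ is large. Then $\mathcal L\varphi<0$ at the maximum — a contradiction — so the test function is bounded by $\max\{c,\,(\text{its value at }t=0)\}$.

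I expect the main obstacle to be the term $\beta^2\rho^2F^{ij}v_iv_j$ arising from the gradient of $F$ at the critical point: it carries the factor $\beta^2$, one power more than the good negative term, and is absorbed only because the $\nu$-type coordinates make $F^{ij}v_iv_j$ proportional to $|Du|^2|A|^2H^{\rho-1}\le G^2|A|^2H^{\rho-1}$, so that the smallness of $G$ — necessarily chosen after $\beta$ — is indispensable. A secondary nuisance is $\rho(\rho-1)F\tilde v/z$ when $\rho>1$: it is dominated only because $H^{\rho-1}|A|^2$ carries one more power of $H$ than $H^\rho/z$ as soon as $zH\gtrsim 1$, which is precisely what a large value of the test function delivers, provided $\beta$ is chosen large relative to $(\rho-1)/\rho^2$.
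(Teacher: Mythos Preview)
Your approach is correct and is essentially the paper's own argument: apply the parabolic maximum principle to the logarithm of the test function, use the evolution equations of Section~\ref{sc:evolution}, eliminate $F^{-1}F_i$ via the critical--point relation, and balance the positive gradient contributions against the dominant negative term $\rho H^{\rho-1}|A|^2(1-\beta\rho v)$ by first fixing $\beta$ large and then $G$ small. The paper (in the closely related proof of the time--localised version) keeps an auxiliary exponent $\tau$ on $F$ and passes through $h_{11}$ before setting $\tau=1/\rho$, whereas you work directly with $(-\eta_\alpha X^\alpha)^\rho F e^{\beta\rho v}$; this is only an organisational difference.
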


We omit the proof as it is similar to the proof of the following theorem that allows also to localize in time.

\begin{theorem}
Let $\rho > 0$, $T > 0$ and $\left(M_t\right)_{t\in[0,T]}$ be a family of complete strictly convex $C^2$-hypersurfaces solving \eqref{eq:mean}. Pick a coordinate system such that each $M^-_t := M_t \cap \{x^{n+1} \leq 0\}$ can be written as $\graph u(\cdot,t)$ in some domain with $\vert Du(\cdot,t) \vert \leq G$ in $M^-_t$. If $M^-_0$ is bounded, $\beta = \beta(n,\rho) > 1$ is large and $G = G(n,\rho,\beta) > 0$ is small enough, then
 \begin{eqnarray*}
t^\rho\left(-\eta_\alpha X^\alpha\right)^\rho F e^{\beta v \rho}
\end{eqnarray*}
is bounded by $c = c(n,\rho,\beta,G,\max -\eta_\alpha X^\alpha,\inf F,T)$.
\end{theorem}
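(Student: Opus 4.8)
The plan is to apply the parabolic maximum principle to
$w:=t^{\rho}\,\phi^{\rho}\,F\,e^{\beta\rho v}$, where $\phi:=-\eta_{\alpha}X^{\alpha}\ge 0$ is the depth of a point of $M^-_t$ below the hyperplane $\{x^{n+1}=0\}$ and, as in Section~\ref{sc:evolution}, $\tilde v=-\eta_{\alpha}\nu^{\alpha}$, $v=\tilde v^{-1}$, so that $1\le v\le\sqrt{1+G^2}$ on $M^-_t$. Since $\phi$ vanishes on $\partial M^-_t=M_t\cap\{x^{n+1}=0\}$ and $t^{\rho}$ vanishes at $t=0$, the function $w$ vanishes on the parabolic boundary of $\bigcup_{t\in[0,T]}M^-_t\times\{t\}$; because $M^-_0$ is bounded and the flow moves each point of $M_t$ upward ($\dot X^{n+1}=-H^{\rho}\nu^{n+1}>0$ as the normal points downward), the sets $M^-_t$ stay inside a fixed bounded region, so $w$ attains a positive maximum $m$ at some point $(p_0,t_0)$ with $t_0>0$ --- unless $w\equiv 0$, in which case there is nothing to prove. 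It suffices to bound $m$.

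At $(p_0,t_0)$ I would work with $\log w=\rho\log t+\rho\log\phi+\log F+\beta\rho v$ and compute $\mathcal{L}(\log w)$, where $\mathcal{L}:=\frac{d}{dt}-F^{ij}(\cdot)_{;ij}$; here $\mathcal{L}(\log t)=t^{-1}$ and $\mathcal{L}(\log f)=\mathcal{L}f/f+F^{ij}f_if_j/f^2$ for $f=\phi,F$, so the evolution equations of Section~\ref{sc:evolution} supply everything. Using $F^{ij}h_{ij}=\rho H^{\rho-1}g^{ij}h_{ij}=\rho F$ gives $\mathcal{L}\phi=(\rho-1)F\tilde v$, while $\mathcal{L}F=F\,S$ and $\mathcal{L}v=-vS-\tfrac{2}{v}F^{ij}v_iv_j$ with $S:=F^{ij}h^k_ih_{kj}=\rho H^{\rho-1}\abs{A}^2$. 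At the maximum $(\log w)_{;i}=0$, i.e.\ $F_i/F=-\rho\,\phi_i/\phi-\beta\rho v_i$, which I substitute (together with $F^{ij}(a_i+b_i)(a_j+b_j)\le 2F^{ij}a_ia_j+2F^{ij}b_ib_j$) to eliminate $F^{ij}F_iF_j/F^2$ in favour of $F^{ij}\phi_i\phi_j/\phi^2$ and $F^{ij}v_iv_j$. The two estimates that make this particular test function work are the trace inequality $S\ge\tfrac{\rho}{n}F^{(\rho+1)/\rho}$ (from $\abs{A}^2\ge H^2/n$ and $H=F^{1/\rho}$) and, crucially, $F^{ij}v_iv_j\le v^4G^2S$, which follows from the Weingarten equation in the form $v_{;i}=-v^2h^k_i\phi_k$ together with $g^{ij}\phi_i\phi_j=\abs{Du}^2/(1+\abs{Du}^2)\le G^2$. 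Collecting terms, $0\le\mathcal{L}(\log w)$ becomes
\begin{equation*}
0\le\frac{\rho}{t}+\frac{\rho(\rho-1)F}{v\,\phi}+(\rho+2\rho^2)\,\frac{F^{ij}\phi_i\phi_j}{\phi^2}+\bigl(1+2\beta^2\rho^2v^4G^2-\beta\rho v\bigr)S .
\end{equation*}

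Now I would fix the constants in this order. Choose $\beta=\beta(n,\rho)$ with $\beta>1$ and $\beta\rho>2$; then choose $G=G(n,\rho,\beta)$ small enough that $2\beta^2\rho^2v^4G^2\le\tfrac12\beta\rho v-1$ (legitimate since $v\le\sqrt{1+G^2}$ is then bounded), so the last bracket is $\le-\tfrac12\beta\rho v\,S\le-\tfrac12\beta\rho\,S$. It remains to absorb the first three terms. At the maximum, the identity $t^{\rho}\phi^{\rho}Fe^{\beta\rho v}=m$ together with $t\le T$, $\phi\le d_0:=\max(-\eta_{\alpha}X^{\alpha})$ and $v\le\sqrt{1+G^2}$ gives $\phi\ge m^{1/\rho}(Te^{2\beta})^{-1}F^{-1/\rho}$ and $t^{-1}\le d_0e^{2\beta}F^{1/\rho}m^{-1/\rho}$; combined with $F\ge\inf F$, $S\ge\tfrac{\rho}{n}F^{(\rho+1)/\rho}$, and (for the $\phi$-gradient term) $\rho H^{\rho-1}\le nS/F^{2/\rho}$, each of $\rho/t$, $\rho(\rho-1)F/(v\phi)$ (which is $\le 0$ when $\rho\le 1$) and $(\rho+2\rho^2)F^{ij}\phi_i\phi_j/\phi^2$ is bounded by $\tfrac16\cdot\tfrac12\beta\rho S$ as soon as $m$ exceeds a threshold depending on $n,\rho,\beta,G,d_0,\inf F,T$. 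For such $m$ we get $0\le\mathcal{L}(\log w)\le-\tfrac14\beta\rho S<0$, a contradiction; hence $m\le c=c(n,\rho,\beta,G,\max(-\eta_{\alpha}X^{\alpha}),\inf F,T)$.

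The main obstacle is the bookkeeping: the exponential factor $e^{\beta\rho v}$ produces constants like $e^{2\beta}$ in the lower bound for $\phi$ and the upper bound for $t^{-1}$, and these must be dominated by the power $m^{1/\rho}$ rather than by $\beta$, so the order ``$\beta$, then $G$, then the threshold $c$'' is essential, and one must check that $\beta$ depends only on $(n,\rho)$ and $G$ only on $(n,\rho,\beta)$. The remaining work --- the precise evolution equations, the identity $v_{;i}=-v^2h^k_i\phi_k$, and the Cauchy--Schwarz estimates above --- is routine. A secondary technical point is the justification that $w$ attains its supremum, i.e.\ that the $M^-_t$ remain in a bounded region, which uses only the convexity of $M_t$ and the upward direction of the flow.
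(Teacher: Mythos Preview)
Your proof is correct and follows essentially the same approach as the paper. Both apply the parabolic maximum principle to (the logarithm of) the same test function --- the paper writes it as $t\,\phi\,F^{1/\rho}e^{\beta v}$ and takes the $\rho$-th power at the end, you work with $t^{\rho}\phi^{\rho}Fe^{\beta\rho v}$ directly --- and both use the first-order condition to eliminate $F_{;i}/F$, then make the coefficient of $S=F^{ij}h_i^kh_{kj}$ strictly negative by choosing $\beta$ large and $G$ small. The only cosmetic difference is in the endgame: the paper passes through the auxiliary quantity $\bar w=t\,\phi\,h_{11}e^{\beta v}$ and obtains an inequality of the form $\Lambda\bar w\le c/\tilde w+c$, whereas you absorb the lower-order terms directly via $S\ge\tfrac{\rho}{n}F^{(\rho+1)/\rho}$ together with the lower bound $\phi\ge m^{1/\rho}(Te^{2\beta})^{-1}F^{-1/\rho}$ and $F\ge\inf F$; this avoids the $h_{11}$ detour but is otherwise equivalent.
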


\begin{proof}
Let $\tau > 0$. We want to apply the parabolic maximum principle to
\begin{equation*}
  \tilde{w}:=t (-\eta_\alpha X^\alpha)F^\tau e^{\beta v} \qquad \text{in } \bigcup_{0 \leq t \leq T} M_t^-.
\end{equation*}
Consider a point $(x_0,t_0) \in M_{t_0}^-$ such that $\tilde{w}(x_0,t_0) \geq \tilde{w}(x,t)$ for all $0 \leq t \leq t_0$ and $x \in M_t^-$. We may assume that $t_0>0$ and $\tilde{w}(x_0,t_0)>0$. Choose new coordinates around $x_0 \in M_{t_0}^-$ such that $g_{ij}(x_0,t_0)=\delta_{ij}$ and $h_{ij}(x_0,t_0)$ is diagonal with $h_{11} \geq h_{ii}$ for $i=2,\ldots,n$. As in \cite{cg:closed}, we may consider
\begin{equation*}
  w:=\log t+\log(-\eta_\alpha X^\alpha)+\log F^\tau+\beta v
\end{equation*}
instead of $\tilde{w}$. We obtain at $(x_0,t_0)$
\begin{equation*}
  0\leq\dot{w}=\frac{1}{t}+\frac{-\eta_\alpha\dot{X}^\alpha}{-\eta_\alpha X^\alpha} + \tau \frac{\dot{F}}{F} + \beta \dot{v},
\end{equation*}
\begin{equation}
 \label{eq:maximum}
  0=w_i=\frac{-\eta_\alpha X_i^\alpha}{-\eta_\alpha X^\alpha}+\tau\frac{F_{;i}}{F}+\beta v_i,
\end{equation}
\begin{equation*}
  0\geq w_{;ij}=\frac{-\eta_\alpha X_{;ij}^\alpha}{-\eta_\alpha X^\alpha}+\tau\frac{F_{;ij}}{F}+ \beta v_{;ij}-\frac{\eta_\alpha X_i^\alpha \eta_\gamma X_j^\gamma}{(-\eta_\alpha  X^\alpha)^2}-\tau\frac{F_{;i}F_{;j}}{F^2},
\end{equation*}
\begin{equation*}
  0\leq\dot{w}-F^{ij}w_{;ij}.\\
\end{equation*}
\par
Due to the diagonal form of $F^{ij}(x_0,t_0)$ and according to section \ref{sc:evolution} we get
\begin{align*}
0\leq & \, \frac{1}{t} + \frac{1}{v}\left(F^{ij} h_{ij}-F\right)\frac{1}{-\eta_\alpha X^\alpha} + \tau\frac{1}{F}FF^{ij}h^2_{ij}\\
& \, +  \beta\left(-v F^{ij}h^2_{ij}-2\frac{1}{v}\sum_i F^{ii} v^2_i\right) + \sum_iF^{ii}\left(\frac{-\eta_\alpha X_i^\alpha}{-\eta_\alpha  X^\alpha}\right)^2 \\
& \, +  \frac{1}{\tau}\sum_iF^{ii}\left(\frac{-\eta_{\alpha} X_i^\alpha}{-\eta_\alpha X^\alpha}+ \beta v_i\right)^2 \qquad \text{by } (\ref{eq:maximum}).
\end{align*}
We have strict convexity, thus
\begin{align*}
0\leq & \, \frac{1}{t} +  \frac{1}{v}\left(F^{ij} h_{ij}-F\right)\frac{1}{-\eta_\alpha X^\alpha} + \left(\tau-\beta v\right)F^{ij}h^2_{ij}\\
& \, +  \left(\frac{1}{\tau}\beta^2-2\beta\frac{1}{v}\right)v^4\vert Du\vert^2F^{ij}h_{ij}^2 + \left(1+\frac{1}{\tau}\right)\frac{\vert Du\vert^2}{\left(-\eta_\alpha X^\alpha\right)^2} trF^{ij}\\
& \, +  2\frac{1}{\tau}\beta v^2\frac{\vert Du\vert^2}{-\eta_\alpha X^\alpha} F^{ij}h_{ij},
\end{align*}
where we have assumed $\tau\leq\frac{1}{2}\beta$ and have used that $\vert \eta_\alpha X_i^\alpha\vert =\vert u_i\vert\leq\vert Du\vert\leq v$ and $v_i=-v^2\left(-\eta_\alpha \nu_i^\alpha\right)=v^2\eta_\alpha h_i^kX_k^\alpha=v^2h_{ii}\eta_\alpha X_i^\alpha$. Observe that $v\geq1$. 
We obtain
\begin{align*}
& \, \Lambda F^{ij}h_{ij}^2\equiv\left(-\tau+\beta v\left(1+\left(2-\frac{1}{\tau}\beta v\right) v^2\vert Du\vert^2\right)\right)F^{ij}h_{ij}^2\\
\leq & \, \frac{1}{t}+\frac{1+\frac{2}{\tau}\beta v^2\vert Du\vert^2}{-\eta_\alpha X^\alpha}F^{ij}h_{ij}+\frac{\left(1+\frac{1}{\tau}\right)\vert Du\vert^2}{(-\eta_\alpha X^\alpha)^2}trF^{ij}.
\end{align*}
Set $\bar{w}:=t\left(-\eta_\alpha X^\alpha\right)h_{11}e^{\beta v}$ and let $F=H^\rho$, hence
\begin{align*}
\Lambda \bar{w} \leq & \, \frac{nt\left(-\eta_\alpha X^\alpha\right)e^{\beta v}}{t\rho F} + \frac{n\left(1+\frac{2}{\tau}\beta v^2\vert Du\vert^2\right)t\left(-\eta_\alpha X^\alpha\right)e^{\beta v}}{-\eta_\alpha X^\alpha} \\
& \, + \frac{n^2\left(1+\frac{1}{\tau}\right)\vert Du\vert^2t^2\left(-\eta_\alpha X^\alpha\right)e^{2\beta v}}{t\left(-\eta_\alpha X^\alpha\right)^2h_{11}e^{\beta v}} \\
\leq & \, \frac{c\left(\vert Du \vert,T,\beta,n,\tau\right)}{\tilde{w}} + c\left(\vert Du\vert,T,\beta,\max -\eta_\alpha X^\alpha,\inf F,n,\rho,\tau\right).
\end{align*}
If we fix $\beta$ large enough and then assume $G$ is sufficiently small, so that the $\Lambda$-term becomes bigger than $1$, $\bar{w}$ is bounded. Due to $\frac{1}{n} F^\tau \leq h_{11} \leq n F^\tau$ for $\tau = \frac{1}{\rho}$, $t^{\frac{1}{\tau}}\left(-\eta_\alpha X^\alpha\right)^\frac{1}{\tau} H^\rho e^{\beta v \frac{1}{\tau}}$ is bounded and the claim follows.
\end{proof}

\section{The $\nu$-Condition and Coordinate Systems}\label{sc:coordinate}

\subsection{The $\nu$-condition} We define a class of hypersurfaces for which oscillations of the normal decay at infinity.

\begin{definition}[$\nu$-condition, \cite{su:gauss}]\label{df:nu-condition}
The oscillation of the normal of a $C^1$-hy\-per\-sur\-face $M \subset \R^{n+1}$ is said to decay at infinity, if for every $\epsilon > 0$, there exists $r = r(\epsilon) > 0$ such that for all $p, q \in M \setminus B_r(0)$ such that $\vert p - q \vert < 1$, we have
\begin{align*}
\vert \nu(p) - \nu(q) \vert < \epsilon,
\end{align*}
where $\nu$ denotes a continuous choice of the unit normal vector. We say that such a hypersurface $M$ fulfills the $\nu$-condition.\par
Let $M \subset \R^{n+1}$ be as above. Then $M$ is said to fulfill a uniform $\nu$-condition for $\epsilon > 0$, if for all $p, q \in M$ such that $\vert p-q \vert < 1$, we have $\vert \nu(p) - \nu(q) \vert < \epsilon$.\par
A function $u \in C_{loc}^1(\R^n)$ is said to fulfill a (uniform) $\nu$-condition (for $\epsilon > 0$) if $\graph u$ is a hypersurface that fulfills the (uniform) $\nu$-condition (for $\epsilon > 0$).
\end{definition}
\begin{remark} 
A hypersurface which is close to the cone given by the graph of the function $k: \R^2 \to \R,\, x \mapsto \max\{x^1,-x^1,x^2,-x^2\}$ does not fulfill the $\nu$-condition. The function $u: \R^n \to \R,\, u(x) = \vert x \vert^2$ fulfills the $\nu$-condition.\par
Let $u \in C^1(\R^n)$. Then for each $(x,u(x))$ there exists a rotated coordinate system such that $M = \graph u$ can be written as a graph with small gradient in some neighborhood of $(x,u(x))$. If $u$ fulfills the $\nu$-condition and if $\vert x \vert$ is sufficiently large, this neighborhood contains a ball of large radius. So for each pair of points $p, q \in M$ with $\vert  p-q \vert \geq 1$, there exists a sequence $p = p_1,\ldots,p_k = q,\, p_i \in M$, such that $\vert p_i - p_{i-1} \vert < 1$ and $k \leq 2\vert p-q \vert$. Hence we could replace the condition $\vert p-q \vert < 1$ in Definition \ref{df:nu-condition} by $\vert p-q \vert < R$ for any fixed $R > 0$ without changing the meaning of the $\nu$-condition for entire graphs.
\end{remark}
\begin{lemma}\label{lm:comparison}
Let $T > 0,\,\rho > 0$, and $u \in C_{loc}^{2;1}(\R^n\times(0,T))\cap C_{loc}^0(\R^n\times[0,T])$ be a convex solution to \eqref{eq:graph}. Assume that $u \geq 0$. Let $\epsilon,\,r_\epsilon > 0$. Then there exist $\delta,\,r_\delta$, depending only on $\left(\epsilon,\,r_\epsilon,\,T,\,\rho,\,n\right)$ such that
\begin{align*}
\sup_{B_{r_\epsilon}(0)\times[0,T]} u \leq \epsilon \quad \text{if} \quad \sup_{B_{r_\delta}(0)\times\left\{0\right\}} u \leq \delta.
\end{align*}
\end{lemma}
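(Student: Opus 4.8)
The plan is to construct an explicit rotationally symmetric supersolution that is small on $B_{r_\epsilon}(0)$ for all $t \in [0,T]$ and use it as a barrier via the comparison principle for \eqref{eq:graph}. Since $u \geq 0$ is convex, the only mechanism that can make $u$ large on a fixed ball is growth propagating inward from the boundary data; we dominate this by a paraboloid-type profile whose flow we can control. Concretely, I would look for a comparison function of the form $\bar u(x,t) = a(t)\,\phi(|x|) + b(t)$ where $\phi$ is a fixed smooth convex radial function that is bounded (say $\phi(s) \leq 1$) on $B_{r_\epsilon}(0)$ but grows superlinearly for $|x| \geq r_\delta$, and where $a,b$ are chosen so that $\bar u(\cdot,0) \geq u_0$ outside $B_{r_\delta}(0)$ automatically (by forcing $\bar u \geq $ the linear cone spanned by the convex $u_0$ at infinity — here convexity of $u$ is what lets a single paraboloid dominate $u$ everywhere outside a ball once it dominates the gradient bound). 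The key computation is that for such $\bar u$ the parabolic operator $\partial_t \bar u - \sqrt{1+|D\bar u|^2}\,(\divergenz(\cdots))^\rho$ can be made nonnegative by choosing $\dot a$ large enough relative to the curvature term, since $H[\bar u] \leq c(n)\, a(t)\,\|D^2\phi\|/\sqrt{1+|D\bar u|^2}$ and $\sqrt{1+|D\bar u|^2}\,H^\rho \leq c(n,\rho)\, a(t)^\rho (1+|D\bar u|^2)^{(1-\rho)/2}\|D^2\phi\|^\rho$, which is controlled by an ODE for $a(t)$ that stays finite on $[0,T]$. A clean choice is $\phi(s) = (1 + s^2/r_\delta^2)^{1/2}$ or simply $\phi(s)=\sqrt{r_\delta^2 + s^2}$, but one should verify the supersolution inequality holds; the flexibility in $\delta$ and $r_\delta$ is exactly what absorbs the resulting constants.

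The steps, in order, would be: (1) reduce to comparing on $B_R(0)\times[0,T]$ for $R$ large (to be chosen) using that outside $B_R$ the barrier dominates $u$ for free once it dominates $u$ at $t=0$, which follows from convexity of $u(\cdot,t)$ and the fact that a convex function lies below the supporting... no — lies below the secant cone, so it suffices to control $u$ and $|Du|$ at a single radius; (2) write down $\bar u$ explicitly and compute $\partial_t\bar u - \mathcal{F}[\bar u]$ where $\mathcal{F}$ is the right-hand side of \eqref{eq:graph}, reducing the supersolution property to an ODE differential inequality for $a(t)$; (3) solve that ODE on $[0,T]$, which fixes $a(T)$ as a function of $(r_\delta,\rho,n,T)$; (4) choose $r_\delta$ large and $\delta$ small so that on the one hand $\bar u(\cdot,0) \geq u_0$ on all of $\R^n$ whenever $\sup_{B_{r_\delta}}u_0 \leq \delta$ (using $u_0 \geq 0$ and convexity to reduce this to a bound at $|x| = r_\delta$, with the gradient of $u_0$ at that radius controlled by $\delta/r_\delta$ via convexity and $u_0\ge 0$), and on the other hand $a(T)\phi(s) + b(T) \leq \epsilon$ for $s \leq r_\epsilon$; (5) invoke the comparison principle for the quasilinear parabolic equation \eqref{eq:graph} — valid since both functions are convex so the equation is parabolic along the relevant range — to conclude $u \leq \bar u \leq \epsilon$ on $B_{r_\epsilon}(0)\times[0,T]$.

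The main obstacle I expect is making step (4) consistent: the barrier $\bar u$ must simultaneously be an \emph{initial} upper bound for $u_0$ on the \emph{entire} $\R^n$ (not just near the origin), yet be \emph{small} near the origin at the final time. The tension is that $a(t)$ inevitably grows over $[0,T]$, so $a(T)$ could be much larger than $a(0)$; one pays for this by taking $r_\delta$ large, which makes $\phi$ flatter near the origin (after normalizing), and $\delta$ correspondingly small. One must check the quantitative dependence really does close — i.e. that $\delta$ can be chosen \emph{after} $r_\delta$, depending only on $(\epsilon, r_\epsilon, T, \rho, n)$ as claimed — and here the convexity hypothesis plus $u \geq 0$ is essential, since it turns the requirement "$\bar u(\cdot,0) \geq u_0$ on $\R^n$" into the single scalar inequality $\delta$ small enough at radius $r_\delta$ (the convex $u_0$ with a small value and hence small slope at $r_\delta$ grows at most linearly outward, and a paraboloid beats any linear function at infinity). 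A secondary technical point is justifying the comparison principle on an unbounded domain for this degenerate-when-non-strictly-convex equation; the standard fix is to compare first on $B_R(0) \times [0,T]$ with the boundary ordering supplied by the outside-the-ball argument in step (1), then let $R \to \infty$, or equivalently to note $\bar u - u \to +\infty$ at spatial infinity so the minimum is attained at an interior or initial point.
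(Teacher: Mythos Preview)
Your approach has a genuine gap in step~(4). The sentence ``the convex $u_0$ with a small value and hence small slope at $r_\delta$ grows at most linearly outward, and a paraboloid beats any linear function at infinity'' is backwards: convexity gives a \emph{lower} bound on the growth of $u_0$ at infinity, not an upper bound. Concretely, $u_0(x)=\lvert x\rvert^4$ is convex, nonnegative, and satisfies $u_0\le\delta$ on $B_{\delta^{1/4}}(0)$, yet no function of the form $a\,\phi(\lvert x\rvert)+b$ with quadratic $\phi$ can dominate it on all of $\R^n$. Since the lemma requires $(\delta,r_\delta)$ to depend only on $(\epsilon,r_\epsilon,T,\rho,n)$, your barrier must work for \emph{every} admissible $u_0$, and the hypotheses impose no growth restriction on $u_0$ at infinity. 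This breaks the initial ordering $\bar u(\cdot,0)\ge u_0$ globally, and for the same reason breaks your localization in steps~(1) and~(5): you cannot arrange $\bar u\ge u$ on $\partial B_R\times[0,T]$ without already knowing the conclusion, and the alternative ``$\bar u-u\to+\infty$ at spatial infinity'' is likewise false in general.

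The paper sidesteps this entirely by using a \emph{compact} barrier, namely an exact shrinking sphere. Under \eqref{eq:mean} a sphere of radius $r(0)$ evolves with $r(t)=\bigl(r(0)^{\rho+1}-(\rho+1)(n-1)^\rho t\bigr)^{1/(\rho+1)}$. One centers a sphere of radius $h+\tfrac{\epsilon}{2}$ at $(0,h+\epsilon)\in\R^{n+1}$, with $h$ chosen so that the shrunk sphere at time $T$ still passes through $\partial B_{r_\epsilon}(0)\times\{\epsilon\}$. The enclosed ball lies in $\{x^{n+1}\ge\tfrac{\epsilon}{2}\}$, so the condition that it sit inside the epigraph of $u_0$ is the purely \emph{local} requirement $u_0\le\tfrac{\epsilon}{2}$ on $B_{h+\epsilon/2}(0)$; one simply takes $(\delta,r_\delta)=(\tfrac{\epsilon}{2},\,h+\tfrac{\epsilon}{2})$. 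The avoidance principle for \eqref{eq:mean} (with one hypersurface compact) then keeps the ball inside the epigraph for all $t\in[0,T]$, forcing $u\le\epsilon$ on $B_{r_\epsilon}(0)\times[0,T]$. What $u_0$ does outside $B_{r_\delta}(0)$ is irrelevant---which is exactly the issue your entire-graph barrier cannot finesse.
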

\begin{proof}
The radius of a sphere evolving by \eqref{eq:mean} is given by
\begin{align*}
r(t) = \left(r(0)^{\rho+1}-(\rho+1)(n-1)^\rho t\right)^{\frac{1}{\rho+1}}.
\end{align*}
Assume without loss of generality that $r_\epsilon > 0$ is so big that there exists a positive solution $h$ to 
\begin{align*}
\left(\left(h+\frac{\epsilon}{2}\right)^{\rho+1}-(\rho+1)(n-1)^\rho T\right)^\frac{1}{\rho+1} = \sqrt{h^2 + r_\epsilon^2}.
\end{align*}
\par
This means that a sphere of radius $h+\frac{\epsilon}{2}$ and center $(0,h+\epsilon)$ at time $t=0$, which evolves according to \eqref{eq:mean} contains $\partial B_{r_\epsilon}(0)\times\{\epsilon\}$ at $t=T$. It acts as a barrier if $\graph u$ lies below it at $t=0$. Hence it suffices to choose $\left(\delta,r_\delta\right)=\left(\frac{\epsilon}{2},h+\frac{\epsilon}{2}\right)$.
\end{proof}

\begin{figure}[htbp]
  \centering
    \includegraphics{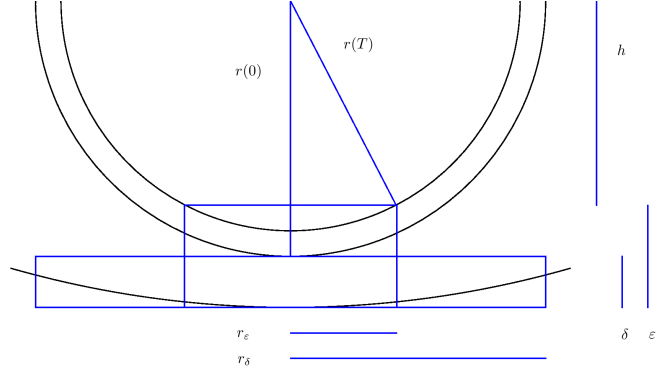}
  \caption{Spheres acting as barriers}
\end{figure}

According to the proof of Lemma \ref{lm:comparison} we immediately get the following generalization which is independent of a graphical representation for all times.
\begin{lemma}\label{lm:comparisonII}
Let $T > 0,\,\rho > 0$. Let $(M_t)_{0\leq t\leq T}$ be a continuous family of complete convex hypersurfaces. Assume that it is smooth for $t > 0$ and fulfills \eqref{eq:mean}. Let $\Omega_t$ denote the closed convex bodies such that $M_t = \partial\Omega_t$ for all $0 \leq t < T$. Let $\epsilon,\,r_\epsilon > 0$. Then there exist $(\delta,r_\delta,h)$, depending only on $(\epsilon,\,r_\epsilon,\,T,\rho,\,n)$ such that $M_t \cap\left(B_{r_\epsilon}(0)\times[0,\epsilon]\right)$ can be written as $\graph u|_{B_{r_\epsilon}(0)}$ with $0\leq u\leq\epsilon$ in $B_{r_\epsilon}(0)$ if
\begin{align*}
B_{h+\frac{\epsilon}{2}} ((0,h+\epsilon)) \subset \Omega_0 \subset \left\{ x^{n+1} \geq 0 \right\}.
\end{align*}
\end{lemma}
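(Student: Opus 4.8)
The plan is to re-use the two barrier spheres of the proof of Lemma \ref{lm:comparison} and to replace the use of an a priori global graphical representation of $M_t$ by elementary convex geometry of the bodies $\Omega_t$. I would take $(\delta,r_\delta,h)$ to be exactly the constants from that proof; so $h>0$ is chosen to solve
\begin{align*}
\left(\left(h+\tfrac{\epsilon}{2}\right)^{\rho+1}-(\rho+1)(n-1)^\rho T\right)^{\frac{1}{\rho+1}}=\sqrt{h^2+r_\epsilon^2}
\end{align*}
--- enlarging $r_\epsilon$ beforehand if necessary so that a positive solution $h$ exists, which is harmless since the conclusion for a larger $r_\epsilon$ implies it for any smaller one --- and $(\delta,r_\delta)=\bigl(\tfrac{\epsilon}{2},h+\tfrac{\epsilon}{2}\bigr)$, all depending only on $(\epsilon,r_\epsilon,T,\rho,n)$. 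I also write $r(t):=\bigl((h+\tfrac{\epsilon}{2})^{\rho+1}-(\rho+1)(n-1)^\rho t\bigr)^{1/(\rho+1)}$ and $R:=r(T)=\sqrt{h^2+r_\epsilon^2}>0$, so that $R\le r(t)$ for $0\le t\le T$.

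Next I would install the two barriers. For the \emph{lower} barrier, the hyperplane $\{x^{n+1}=0\}$ is a stationary solution of \eqref{eq:mean}; since each $M_t$ is convex, its mean curvature is nonnegative, so every boundary point of $\Omega_t$ moves inward and the bodies are nonincreasing in $t$, whence $\Omega_t\subset\Omega_0\subset\{x^{n+1}\ge 0\}$ for all $0\le t\le T$. This will yield $u\ge 0$ automatically. For the \emph{inner} barrier, the hypothesis gives $B_{h+\epsilon/2}((0,h+\epsilon))\subset\Omega_0$; the sphere $\partial B_{h+\epsilon/2}((0,h+\epsilon))$ evolves under \eqref{eq:mean} through the spheres $\partial B_{r(t)}((0,h+\epsilon))$, and by the choice of $h$ this remains a nondegenerate sphere up to $t=T$, of radius $R$. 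By the avoidance principle for \eqref{eq:mean} --- here applied to the compact evolving sphere against the complete hypersurface $M_t$, cf.\ \cite{fs:evolution} --- we obtain $B_R((0,h+\epsilon))\subseteq B_{r(t)}((0,h+\epsilon))\subseteq\Omega_t$ for every $0\le t\le T$.

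Finally I would read off the graph. Fix $t\in[0,T]$. The set $\Omega_t$ is closed and convex with $B_R((0,h+\epsilon))\subseteq\Omega_t\subseteq\{x^{n+1}\ge 0\}$, so for each $x\in\R^n$ the vertical slice $\{y\in\R:(x,y)\in\Omega_t\}$ is a closed interval $[u(x,t),U(x,t)]$ (possibly unbounded above), and $u(\cdot,t)$ is convex. For $|x|\le r_\epsilon$ the slice contains the vertical chord of $B_R((0,h+\epsilon))$ over $x$, that is, all heights between $h+\epsilon-\sqrt{R^2-|x|^2}$ and $h+\epsilon+\sqrt{R^2-|x|^2}$; using $R^2-r_\epsilon^2=h^2$ this forces
\begin{align*}
0\le u(x,t)\le h+\epsilon-\sqrt{R^2-|x|^2}\le\epsilon,\qquad U(x,t)\ge h+\epsilon+\sqrt{R^2-|x|^2}\ge 2h+\epsilon>\epsilon,
\end{align*}
with the first bound strict, $u(x,t)<\epsilon$, when $|x|<r_\epsilon$. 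Since $\Omega_t$ has nonempty interior, its only boundary points on the line $\{x\}\times\R$ are the two endpoints of that interval; hence over each $x\in B_{r_\epsilon}(0)$ the unique point of $M_t=\partial\Omega_t$ of height in $[0,\epsilon]$ is $(x,u(x,t))$. That is precisely $M_t\cap\bigl(B_{r_\epsilon}(0)\times[0,\epsilon]\bigr)=\graph u(\cdot,t)|_{B_{r_\epsilon}(0)}$ with $0\le u\le\epsilon$, and for $t=0$ it reduces to Lemma \ref{lm:comparison} in the graphical situation.

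The one step needing genuine care is the use of the comparison principle along the non-compact flow $(M_t)$. It is avoided by only ever comparing $M_t$ with a compact barrier (the shrinking sphere), for which avoidance is classical, and with the stationary hyperplane, where the role of a PDE comparison is taken over by the elementary monotonicity of the $\Omega_t$. In contrast to the local $C^2$-estimates of Section \ref{sc:local}, no smallness of the gradient enters at any point: the whole conclusion is squeezed out of the two inclusions $B_R((0,h+\epsilon))\subseteq\Omega_t\subseteq\{x^{n+1}\ge 0\}$.
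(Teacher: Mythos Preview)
Your proof is correct and follows exactly the route the paper indicates: the paper gives no separate argument for this lemma but simply remarks that it is an immediate generalization of the proof of Lemma~\ref{lm:comparison}, independent of a graphical representation. You have spelled out precisely that generalization---the same sphere barrier with the same constants $(\delta,r_\delta,h)$, together with the elementary convex-geometry observation that the inclusions $B_R((0,h+\epsilon))\subset\Omega_t\subset\{x^{n+1}\ge 0\}$ force the vertical slices of $\Omega_t$ over $B_{r_\epsilon}(0)$ to start in $[0,\epsilon]$ and extend above $\epsilon$.
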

\par
We will control the representation as a graph for hypersurfaces that fulfill the $\nu$-condition.
\begin{lemma}[\cite{su:gauss}]\label{lm:comparisonIII}
Let $M \subset \R^{n+1}$ be a complete convex hypersurface of class $C^1$ which fulfills the $\nu$-condition. Let $r \geq 2 > 1 \geq \epsilon > 0$. Then there exists $R > 0$ sufficiently large, depending only on $r,\,\epsilon$ and the $\nu$-condition such that for every $x \in M$ with $\vert x \vert > R$ after a suitable rotation a neighborhood of the origin of $M-x = \left\{y \in \R^n : y + x \in M\right\}$ can be represented as $\graph u|_{B_r(0)}$ with $0 \leq u \leq \epsilon$ in $B_r(0)$.
\end{lemma}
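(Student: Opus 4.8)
The plan is to use the $\nu$-condition to force $M$ to be almost flat near any point $x$ far from the origin, on a ball of radius comparable to $r$, and then to read off the graphical representation by a convexity-based continuation argument, in the spirit of the proof of Theorem~\ref{th:graph}. If $M$ is compact the assertion is vacuous once $R$ is large, so assume $M$ noncompact and let $\Omega$ be the closed convex body with $M=\partial\Omega$. Given $x\in M$, translate by $-x$ and rotate so that a supporting hyperplane of $\Omega$ at $x$ becomes $\{y^{n+1}=0\}$ with $\Omega-x\subseteq\{y^{n+1}\ge0\}$; then $0\in M-x$, the surface $M-x$ lies in the closed upper half space, and the outer unit normal of $\Omega-x$ at $0$ is $-e_{n+1}$. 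Since normals are translation invariant, $\nu_{M-x}(y)=\nu_M(y+x)$, so the $\nu$-condition of $M$ transfers to $M-x$ on $\{\,\vert y+x\vert\ge r_\epsilon\,\}$, and on the part of $M-x$ near $0$ the continuous normal stays near $-e_{n+1}$, hence agrees with the downward one.

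The key quantitative input is a chaining estimate. Fix a small $\epsilon'>0$ to be chosen (the concrete value $\epsilon':=\epsilon/(8r(2r+1))$ works) and let $r_{\epsilon'}$ be the radius supplied by the $\nu$-condition. Suppose that on a closed ball $\overline{B_\rho(0)}$, $\rho\le r$, the surface $M-x$ is already written as $\graph u$ with $u(0)=0$, $u\ge0$, and $\vert Du\vert\le\frac12$. Then every graph point $(y,u(y))$, $\vert y\vert\le\rho$, has norm at most $2r$, and the explicit radial path $s\mapsto(sy,u(sy))$, $s\in[0,1]$, lies on $M-x$ inside the ball of radius $2r$ and has length at most $2r$. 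Subdividing it into at most $2r+1$ arcs of length less than $1$ yields points $0=p_0,p_1,\dots,p_k=(y,u(y))$ on $M-x$, all of norm at most $2r$, with consecutive ones less than $1$ apart. If $\vert x\vert>R$ with $R\ge r_{\epsilon'}+2r$, then each $p_i+x$ has norm at least $r_{\epsilon'}$, so the $\nu$-condition gives $\vert\nu(p_i)-\nu(p_{i-1})\vert<\epsilon'$ and therefore $\vert\nu(y,u(y))+e_{n+1}\vert\le(2r+1)\epsilon'$. As $(2r+1)\epsilon'\le\frac12$ (using $r\ge2$, $\epsilon\le1$), this forces $\vert Du\vert\le2(2r+1)\epsilon'\le\frac14$ on all of $\overline{B_\rho(0)}$, a strict improvement over the assumed bound.

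Now run the continuation. Because $M-x$ is $C^1$ with horizontal tangent plane at $0$, it is a graph over some small ball with $u(0)=0$, $Du(0)=0$; hence the set of admissible $\rho\le r$ is nonempty, and let $\rho_0$ be its supremum. On $\overline{B_{\rho_0}(0)}$ the chaining estimate gives $\vert Du\vert\le\frac14<\frac12$, so near each point of $\partial B_{\rho_0}(0)$ the surface $M-x$ continues as a $C^1$ graph with gradient below $\frac12$; it cannot terminate ($M$ is complete and boundaryless), and the cap cannot fold, since two points of $M-x$ of norm at most $2r$ on a common vertical line would, by convexity, place the lower one strictly on the far side of the (nearly horizontal, downward) supporting hyperplane of $\Omega-x$ at the upper one, contradicting $M-x\subseteq\Omega-x$. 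Therefore $\rho_0=r$ and is attained, so $M-x=\graph u\vert_{B_r(0)}$ with $\vert Du\vert\le\frac14$; finally $u\ge0$ because $M-x\subseteq\{y^{n+1}\ge0\}$, and $u(y)\le\vert y\vert\sup\vert Du\vert\le2r(2r+1)\epsilon'\le\epsilon$. Taking $R:=r_{\epsilon'}+2r$, which depends only on $r$, $\epsilon$ and the $\nu$-condition, completes the proof.

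I expect the continuation step to be the main obstacle: with only $C^1$-regularity and convexity available (no curvature bounds), one must argue carefully that the cap over $B_r(0)$ is a single graph that neither folds nor stops, and that the gradient bound obtained from chaining is genuinely strong enough to keep reopening the continuation. The chaining itself is routine once one replaces "a geodesic of $M$" by the explicit radial path on the partial graph, which stays automatically inside a controlled Euclidean ball so that the $\nu$-condition applies to each individual step.
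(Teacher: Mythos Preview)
Your proof is correct and follows essentially the same approach as the paper: after translating and rotating so that the tangent plane at $x$ is horizontal, both arguments chain the $\nu$-condition along a short path of nearby points on $M-x$ to control $|Du|$ on $B_r(0)$, and then integrate the gradient bound to obtain $0\le u\le\epsilon$. The only cosmetic difference is that the paper first fixes a maximal graphical domain and argues by contradiction at the nearest boundary point where $|Du|=\epsilon/r$, whereas you run an explicit continuation with a bootstrap from $|Du|\le\tfrac12$ to $|Du|\le\tfrac14$; the underlying mechanism is identical.
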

\begin{proof}
We assume without loss of generality that $M-x$ is contained in $\left\{x^{n+1} \geq 0\right\}$. Locally, $M-x$ can be represented as a graph. Due to the convexity of $M$, we find a maximal open set such that there exists a function $u$, which is convex and of class $C^1$, such that $\graph u \subset M-x$. Let $\Omega$ be the subset, where $\vert Du \vert < \frac{\epsilon}{r} \leq \frac{1}{2}$. Observe that $Du(0) = 0$ and $0 \in \Omega$.\par
We claim that for $\vert x \vert$ sufficiently large, $B_r(0) \subset \Omega$. Otherwise consider $x_0 \in \partial\Omega$ with minimal $\vert x_0 \vert$. Pick points $0 = p_0,\,p_1,\ldots,p_k = x_0,\,\vert p_i \vert < \vert x_0 \vert,\, \vert p_i - p_{i+1} \vert < \frac{1}{2}$ and $k \leq 2r+1$. Notice that $\vert (p_i,u(p_i))-(p_{i-1},u(p_{i-1}))\vert < 1$. We may assume that $\vert \nu(p) - \nu(q) \vert < \delta$ for any fixed $\delta > 0$ and $p,\,q \in \graph u|_{B_{\vert x_0 \vert}(0)}$ with $\vert p-q \vert < 1$ by choosing $R = R(\delta,r) > 0$ sufficiently large. Induction yields
\begin{align*}
k\delta \geq \vert \nu(0) - \nu((x_0,u(x_0)))\vert = \left| (0,-1) - \frac{\left(Du(x_0),-1\right)}{\sqrt{1+\frac{\epsilon^2}{r^2}}} \right| \geq \frac{\frac{\epsilon}{r}}{\sqrt{1+\frac{\epsilon^2}{r^2}}} \geq \frac{1}{2} \frac{\epsilon}{r}.
\end{align*}
We obtain $(2r+1)\delta \geq \frac{1}{2} \frac{\epsilon}{r}$ which is impossible for $\delta > 0$ sufficiently small. Hence $\vert Du \vert < \frac{\epsilon}{r}$ in $B_r(0)$.\par
The Lemma follows by integration.
\end{proof}
The following Corollary shows that the $\nu$-condition is preserved during the flow.
\begin{corollary}\label{co:comparison}
Let $T > 0,\,\rho > 0$. Let $\left(M_t\right)_{0\leq t\leq T}$ be a continuous family of complete convex hypersurfaces which is smooth for $t > 0$ and fulfills \eqref{eq:mean} there. Assume that $M_0$ fulfills the $\nu$-condition. Then $M_t$ fulfills the $\nu$-condition for every $0\leq t\leq T$. 
\end{corollary}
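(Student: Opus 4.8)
The plan is to reduce the preservation of the $\nu$-condition to the three lemmas already established, using the decay of oscillations at infinity to pass from the instantaneous ($t=0$) hypothesis to a statement uniform in $t\in[0,T]$. Fix $\epsilon>0$; I want to produce $R>0$ so that $|\nu_t(p)-\nu_t(q)|<\epsilon$ whenever $p,q\in M_t\setminus B_R(0)$ with $|p-q|<1$, uniformly in $t$. First I would fix a small auxiliary $\epsilon'=\epsilon'(\epsilon,n)>0$ and a radius $r=r(\epsilon')\geq 2$ (to be chosen), and invoke the $\nu$-condition for $M_0$ together with Lemma~\ref{lm:comparisonIII}: there is $R_0>0$ such that for every $x\in M_0$ with $|x|>R_0$, after a suitable rotation a neighborhood of the origin of $M_0-x$ is $\graph u|_{B_r(0)}$ with $0\leq u\leq\epsilon'$ on $B_r(0)$. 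Equivalently, writing $\Omega_0$ for the convex body bounded by $M_0$, each such translated/rotated copy contains an inscribed ball of the form $B_{h+\epsilon'/2}((0,h+\epsilon'))$ described in Lemma~\ref{lm:comparisonII}, with $h=h(\epsilon',r)$.

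Next I would run the barrier argument of Lemma~\ref{lm:comparisonII} at every far-away point simultaneously. Translating so that a given $x_0\in M_0$, $|x_0|>R_0$, sits above the origin with the local graph lying in $\{x^{n+1}\geq 0\}$, the lemma yields $(\delta,r_\delta,h)$ depending only on $(\epsilon',r,T,\rho,n)$ such that $M_t$, near that point, is a graph $\graph u(\cdot,t)|_{B_r(0)}$ with $0\leq u(\cdot,t)\leq\epsilon'$ for all $0\leq t\leq T$, provided the inscribed-ball condition holds at $t=0$ — which it does once $|x_0|$ is large, by the previous paragraph (after possibly enlarging $R_0$ so that the rotated local picture genuinely has $|Du_0|$ small enough to supply the required inscribed ball). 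A graph over $B_r(0)$ with sup-norm at most $\epsilon'$ and $r$ large forces, by interior gradient estimates for convex functions (or directly: the convex function $u(\cdot,t)$ on $B_r(0)$ with $0\le u\le\epsilon'$ has $|Du|\le 2\epsilon'/r$ on $B_{r/2}(0)$), the gradient to be small, hence the unit normal to be within $\epsilon$ of $-\eta$ on $B_{r/2}(0)$ after the rotation; choosing $r$ large relative to $\epsilon'$ and $\epsilon'$ small relative to $\epsilon$ makes the oscillation of $\nu_t$ less than $\epsilon$ on that patch.

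Finally I would patch the local estimates into the global $\nu$-condition. Set $R:=R(\epsilon)$ large enough that every $p\in M_t$ with $|p|\geq R$ lies in the central part $B_{r/2}$ of one of the good coordinate patches constructed above (this requires $R$ larger than $R_0$ plus the diameter contribution, exactly as in the remark following Definition~\ref{df:nu-condition}, using that the translated patches cover a ball of radius $r/2$ about each far point of $M_t$ — note $M_t\subset\Omega_0$ by convexity and avoidance of the inscribed regions, so "far on $M_0$'' controls "far on $M_t$''). Given $p,q\in M_t\setminus B_R(0)$ with $|p-q|<1$, both lie in a single such patch (shrinking $r$'s role: take the patch centered near $p$; since $|p-q|<1<r/2$, $q$ is in it too), where $\nu_t$ has oscillation $<\epsilon$; hence $|\nu_t(p)-\nu_t(q)|<\epsilon$. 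This gives the $\nu$-condition for $M_t$.

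The main obstacle is the covering/uniformity step in the last paragraph: one must be sure that the rotations and centers supplied pointwise by Lemmas~\ref{lm:comparisonII} and~\ref{lm:comparisonIII} at $t=0$ still organize, at each later time $t$, into an honest cover of $M_t\setminus B_R(0)$ by patches each of which is a small-gradient graph — in particular that the "center'' $x_0\in M_0$ can be moved to a nearby point of $M_t$ without losing the inscribed-ball hypothesis, and that distinct patches overlap enough that any unit segment stays inside one of them. The cleanest way to handle this is to note, as above, that the inscribed balls are chosen at $t=0$ and the barrier comparison is monotone in $t$, so the bound $0\le u(\cdot,t)\le\epsilon'$ on $B_r(0)$ is genuinely uniform in $t$; the overlap is then automatic because each patch is a graph over a fixed ball $B_r(0)$ of radius $r\geq 2$ in a rotated frame, and consecutive far-away base points on $M_0$ can be taken within distance $\tfrac12$ of one another (remark after Definition~\ref{df:nu-condition}).
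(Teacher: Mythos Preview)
Your proposal is correct and follows essentially the same route as the paper's proof: use Lemma~\ref{lm:comparisonIII} at far points of $M_0$ to place an inscribed ball, feed this into the barrier Lemma~\ref{lm:comparisonII} to get $0\le u(\cdot,t)\le\epsilon'$ on $B_r(0)$ for all $t\in[0,T]$, then use the convexity gradient bound $|Du|\le 2\epsilon'/r$ on $B_{r/2}(0)$ to control the normal. The paper's argument is terser and does not spell out the covering step you flag as the ``main obstacle''; your treatment of that step is more careful than the paper's, but the underlying strategy is identical.
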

\begin{proof}
We want to apply Lemma \ref{lm:comparisonII} to a translated and rotated coordinate system for $r_\epsilon > 0$ big and $\epsilon > 0$ small. According to Lemma \ref{lm:comparisonIII}, we can fulfill the assumption of Lemma \ref{lm:comparisonII} if the origin corresponds to a point outside $B_R$ for $R > 0$ sufficiently large before the change of coordinates. Hence we can apply Lemma \ref{lm:comparisonII}.\par
Now convexity ensures that $\vert Du(x) \vert \leq 2\frac{\epsilon}{r_\epsilon}$ for $\vert x \vert \leq \frac{r_\epsilon}{2}$. Hence for all $0 \leq t \leq T$  the normals of $\graph u|_{B_{r_{\epsilon/2}(0)}|}$ are close to $-e_{n+1}$ and the claim follows.
\end{proof}
For initial data with bounded gradient that fulfill the $\nu$-condition, solutions are unique.
\begin{lemma}
Let $\rho > 0$ and $0 < T \leq \infty$. Let $u_0 \in C_{loc}^1\left(\R^n\right)$ be strictly convex. Assume that $u_0$ fulfills the $\nu$-condition and $\sup_{\R^n} \vert Du_0 \vert < \infty$. Let
\begin{align*}
u_1,\, u_2 \in C_{loc}^{2;1}\left(\R^n \times (0,T)\right) \cap C_{loc}^0\left(\R^n \times [0,T)\right)
\end{align*}
be two strictly convex solutions to \eqref{eq:graph}. Then $u_1 = u_2$.
\end{lemma}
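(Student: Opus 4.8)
The plan is to prove uniqueness by a comparison argument. Since equation \eqref{eq:graph} is degenerate at infinity (only parabolic where $u$ is strictly convex, and the ellipticity constant $\rho H^{\rho-1}$ need not be bounded below on all of $\R^n$), a naive application of the maximum principle on $\R^n\times[0,T)$ fails. First I would localize: fix $\epsilon>0$ and a large radius $r_\epsilon$, and use Lemma \ref{lm:comparison} (applied to $u_i$ after subtracting $u_0$, or to suitable vertical translates so that the relevant functions are nonnegative) together with the $\nu$-condition to control the behavior of $u_1$ and $u_2$ for large $\vert x\vert$. The point is that $u_0$ has bounded gradient and fulfills the $\nu$-condition, so by Lemma \ref{lm:comparisonIII} both graphs look, near any far-away point, like a graph with tiny gradient over a large ball; combined with the sphere barriers of Lemma \ref{lm:comparison} this pins down $u_1$ and $u_2$ to within $\epsilon$ of each other outside $B_{r_\epsilon}(0)$, uniformly on $[0,T']$ for $T'<T$.

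Next I would run the maximum principle for $w:=u_1-u_2$ on the compact cylinder $\overline{B_{r_\epsilon}(0)}\times[0,T']$. On $\{t=0\}$ we have $w=0$; on the lateral boundary $\partial B_{r_\epsilon}(0)\times[0,T']$ we have $\vert w\vert\le C\epsilon$ by the previous step. In the interior, $w$ satisfies a linear parabolic equation obtained by writing
\[
\dot u_1-\dot u_2 = Q(Du_1,D^2u_1)-Q(Du_2,D^2u_2)=\int_0^1 \frac{d}{ds}Q\big(sDu_1+(1-s)Du_2,\,sD^2u_1+(1-s)D^2u_2\big)\,ds,
\]
where $Q(p,A)=\sqrt{1+\vert p\vert^2}\,\big(\mathrm{div}\text{-operator}\big)^\rho$ is the operator on the right of \eqref{eq:graph}. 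This expresses $\dot w = a^{ij}w_{ij}+b^i w_i$ with coefficients that are continuous and, crucially, with $(a^{ij})$ positive definite on the compact set where both solutions are strictly convex $C^{2;1}$ (for $t>0$); near $t=0$ one argues by first applying the maximum principle on $[t_0,T']$ and letting $t_0\to0$, using continuity of $w$ up to $t=0$. The weak maximum principle on the bounded cylinder then gives $\sup w\le C\epsilon$, and symmetrically $\sup(u_2-u_1)\le C\epsilon$, hence $\vert u_1-u_2\vert\le C\epsilon$ on $B_{r_\epsilon}(0)\times[0,T']$.

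Finally, letting $\epsilon\to0$ (which forces $r_\epsilon\to\infty$ and $T'\to T$) yields $u_1\equiv u_2$ on $\R^n\times[0,T)$.

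The main obstacle is handling the degeneracy and the unboundedness of the domain simultaneously: one must ensure the linearized operator is uniformly parabolic on the region used for the maximum principle, and one must get genuine control of $u_1-u_2$ at the spatial boundary $\partial B_{r_\epsilon}$ purely from the $\nu$-condition and the sphere barriers, uniformly in $t$. The delicate point is that the two solutions are only assumed strictly convex, not \emph{uniformly} so, so the barrier construction of Lemmas \ref{lm:comparison}--\ref{lm:comparisonIII} must be invoked in a way that does not quantitatively depend on a lower curvature bound — it depends only on $u_0$'s gradient bound and modulus-of-$\nu$-decay, which is exactly why these hypotheses are in the statement.
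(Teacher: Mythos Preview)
Your approach is correct and essentially matches the paper's: both use the $\nu$-condition together with the sphere barriers (Lemmas \ref{lm:comparison}--\ref{lm:comparisonIII}, Corollary \ref{co:comparison}) to force $u_1-u_2$ to be small at spatial infinity uniformly in $t$, and then invoke the maximum principle on a compact region. The paper packages this a bit more concisely: rather than fixing $\epsilon$, bounding $|u_1-u_2|$ on $\partial B_{r_\epsilon}$, linearizing, and sending $\epsilon\to0$, it simply notes that the barrier estimates make $\sup_{\R^n\times(0,T]}(u_1-u_2)$ \emph{attained} at some interior point $(x_0,t_0)$, which immediately contradicts the (local) maximum principle---so the explicit linearization and the $\epsilon\to0$ limit never need to be written out.
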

\begin{proof}
By decreasing $T$ if necessary, we may assume without loss of generality that $T < \infty$ and $u_1,\,u_2 \in C^{2;1}\left(\R^n \times (0,T]\right)$. According to Lemma \ref{co:comparison}, both solutions fulfill the $\nu$-condition for any $t \in [0,T]$. The proof of that Lemma implies also that $\sup_{i\in\{1,2\}}\sup_{x\in\R^n}\sup_{t\in [0,T]} \vert Du_i (x,t) \vert < \infty$.\par
Assume that $\sup_{\R^n\times (0,T]} u_1 - u_2 = \epsilon > 0$. According to Lemma \ref{lm:comparisonII} and the uniform gradient bound, the supremum is attained. Hence there exists $(x_0,t_0) \in \R^n \times (0,T]$ such that $(u_1 - u_2)(x_0,t_0) = \epsilon$ and $\sup_{\R^n \times \{t\}} u_1 - u_2 < \epsilon$ for any $t < t_0$. This, however, contradicts the maximum principle for compact domains.
\end{proof}
The following lemma ensures im particular that for computing the distance to embedded tangent planes it suffices to minimize the distance to the embedded tangent plane over a compact set if $u(x) \rightarrow \infty$ for $\vert x \vert \rightarrow \infty$.
\begin{lemma}[\cite{su:gauss}]\label{lm:distance}
Let $u \in C_{loc}^2\left(\R^n\right)$ be strictly convex, $M = \graph u$. Assume that $u(x) \rightarrow \infty$ for $\vert x \vert \rightarrow \infty$. Let $R > 0$ and $x_0 \in B_R(0)$. Denote by $T_xM$ the embedded tangent plane $T_{(x,u(x))}M$. Define the compact set $K$ by
\begin{align*}
K := \partial B_R(0) \cup \left\{x \in \R^n \setminus B_R(0) : u(x) \leq u(x_0)\right\}.
\end{align*}
Then
\begin{align*}
\inf_{\R^n \setminus B_R(0)} \dist\left(T_xM,(x_0,u(x_0))\right) = \inf_K \dist(T_xM,(x_0,u(x_0))).
\end{align*}
Note that this expression is positive and continuous in $x_0$.
\end{lemma}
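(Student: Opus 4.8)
The plan is to exploit strict convexity of $u$ together with the coercivity $u(x)\to\infty$ to reduce an infimum over the unbounded set $\R^n\setminus B_R(0)$ to an infimum over the compact set $K$. First I would set $d(x):=\dist\left(T_xM,(x_0,u(x_0))\right)$, where $T_xM$ is the embedded tangent plane at $(x,u(x))$. Since $M=\graph u$ with $u$ convex, the tangent plane at $(x,u(x))$ is the graph of the affine function $\ell_x(y):=u(x)+Du(x)\cdot(y-x)$, and convexity gives $\ell_x(y)\leq u(y)$ for all $y$, with equality at $y=x$. A direct computation yields
\begin{align*}
d(x)=\frac{u(x_0)-\ell_x(x_0)}{\sqrt{1+|Du(x)|^2}}=\frac{u(x_0)-u(x)-Du(x)\cdot(x_0-x)}{\sqrt{1+|Du(x)|^2}}.
\end{align*}
Because $(x_0,u(x_0))$ lies above the tangent plane (again by convexity), the numerator is nonnegative, and it is strictly positive for $x\neq x_0$ by strict convexity; this already gives positivity of the infimum provided it is attained, and continuity in $x_0$ is clear from the explicit formula once attainment is under control.

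Next I would show the infimum over $\R^n\setminus B_R(0)$ cannot be approached along a sequence $x_k$ with $u(x_k)>u(x_0)$ and $|x_k|\to\infty$. The idea is that for such $x$ the numerator $u(x_0)-u(x)-Du(x)\cdot(x_0-x)$ is bounded below by a positive quantity that grows, while one must also control the denominator $\sqrt{1+|Du(x)|^2}$. Here I would use convexity to compare: for $x$ with $u(x)\ge u(x_0)$ and $|x|$ large, pick the point $x'$ on the segment from $x_0$ to $x$ lying on the level set $\{u=u(x_0)\}$ (which exists by coercivity and the intermediate value theorem once $|x|>R$ is large enough that $u(x)>u(x_0)$). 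Convexity along this segment gives that $d(x)\geq d(x')$ up to the ratio of the denominators; more precisely, writing $x-x_0=s(x'-x_0)$ with $s\geq 1$, the supporting-plane inequality at $x$ evaluated at $x_0$ and at $x'$ shows $u(x_0)-\ell_x(x_0)\geq \tfrac{1}{s}\bigl(u(x_0)-\ell_x(x')\bigr)$... — this is the step that needs the most care, and it is the \emph{main obstacle}: one must trade the possibly-large gradient at far-away points against the large vertical gap $u(x)-u(x_0)$, and convexity is exactly what makes this trade favorable, but getting the inequality in the right direction (so that $d$ on the far region dominates $d$ on $K$, not the other way) requires choosing the comparison point and the monotonicity argument carefully.

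Granting that step, the conclusion is short. The set $\{x\in\R^n\setminus B_R(0):u(x)\le u(x_0)\}$ is bounded by coercivity of $u$ and closed by continuity, hence compact, so $K=\partial B_R(0)\cup\{x\in\R^n\setminus B_R(0):u(x)\le u(x_0)\}$ is compact. On $\R^n\setminus B_R(0)$ the function $d$ is continuous and positive; by the previous paragraph its infimum over $\R^n\setminus B_R(0)$ equals its infimum over $\{x\notin B_R(0):u(x)\le u(x_0)\}\cup\partial B_R(0)=K$ (the boundary sphere must be included because a minimizing sequence inside $B_R(0)^c$ with $u(x)\le u(x_0)$ could in principle limit onto $\partial B_R(0)$). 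On the compact set $K$ the continuous positive function $d$ attains a positive minimum, which shows both that the common value is positive and, since the minimizing set is compact and varies upper-semicontinuously with $x_0$ while the explicit formula for $d$ is jointly continuous, that the infimum depends continuously on $x_0\in B_R(0)$.
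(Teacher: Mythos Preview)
Your proposal identifies the right difficulty but does not resolve it. You yourself write ``Granting that step, the conclusion is short'' --- but that step is the entire content of the lemma. Moreover, the inequality you sketch, $u(x_0)-\ell_x(x_0)\ge \tfrac{1}{s}\bigl(u(x_0)-\ell_x(x')\bigr)$, compares two evaluations of the \emph{same} supporting affine function $\ell_x$; it says nothing about $\ell_{x'}$ or $|Du(x')|$, and so does not yield $d(x)\ge d(x')$. There is a second issue: your comparison point $x'$ is taken on the Euclidean segment $[x_0,x]$, and nothing guarantees that $x'\in\R^n\setminus B_R(0)$, so even a correct inequality $d(x)\ge d(x')$ would not place $x'$ in $K$.

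The paper's argument is different and sidesteps both problems. Given $x\notin K$ (so $u(x)>u(x_0)$), the tangent planes $T_xM$ and $T_{x_0}M$ are not parallel and meet in an $(n-1)$-plane; after a rotation one may assume this plane is invariant under translations by $e_2,\dots,e_n$, so that $Du(x)$ is proportional to $e_1$. One then projects orthogonally to the $(x^1,x^{n+1})$-plane, reducing to a strictly convex planar curve, and slides $x$ not along a Euclidean segment but along the $C^1$-curve $\gamma=\{y:Du(y)\parallel e_1\}$ towards $x_0$. In this two-dimensional picture it is an elementary geometric fact (illustrated in the paper's Figure~2) that the distance from the fixed point $(x_0,u(x_0))$ to the tangent line decreases strictly as long as $u(x)>u(x_0)$. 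Sliding until one first hits $\partial B_R(0)$ or the sublevel set $\{u\le u(x_0)\}$ produces a point of $K$ with smaller $d$-value, which is exactly the reduction you need. The dimension reduction along $\gamma$, rather than a segment through $x_0$, is the missing idea.
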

The assumption $u(x) \rightarrow \infty$ for $\vert x \vert \rightarrow$ is not necessary. We have included it as it makes it easier to write down the definition for $K$.
\begin{proof}
We will first reduce this question to a one-dimensional question. Let $x \in R^n \setminus \left(B_R(0) \cup K\right)$. As the tangent planes $T_{x_0}M$ and $T_xM$ are not parallel, they intersect in an $n-1$-dimensional plane. We may assume that this is invariant under translations by the vectors $e_2,\ldots,e_n$. Hence $Du(x)$ is proportional to $e_1$. Observe that all points, where $Du$ is proportional to $e_1$, lie on a $C^1$-curve $\gamma$. We project the situation orthogonally to the $\left(x^1,x^{n+1}\right)$-plane. Then the boundary of the projection of $M$ is given by the projection of $\graph u|_\gamma$. It suffices to show that $\dist\left(T_xM,(x_0,u(x_0))\right)$ decreases if we move $x$ along $\gamma$ towards $x_0$. Hence if suffices to consider a two dimensional situation as shown in Figure 2. We do not introduce new notations for the projected objects.

\begin{figure}[htbp]
  \centering
    \includegraphics{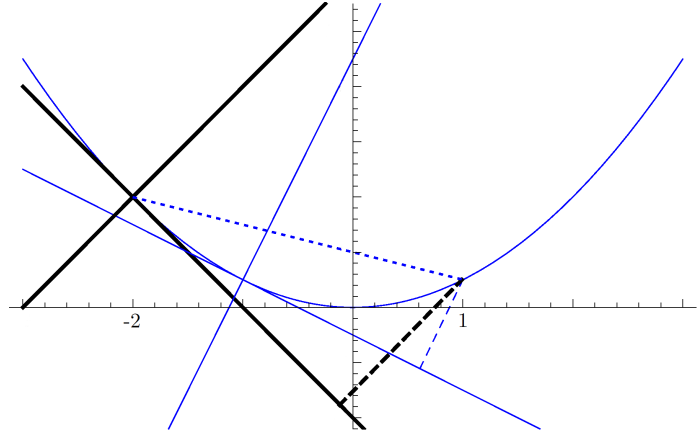}
  \caption{Distance to tangent planes}
\end{figure}

In Figure 2, $x_0 = 1$ and $x = -2$. It shows the tangent plane and normal direction to $\graph u$ at $x$ using a thick line and at the shifted point using a thin line. Dashed lines indicate the distance to the respective tangent lines. The geometric situation, especially $u(x) > u(x_0)$, ensures that the dotted line that connects $(x_0,u(x_0))$ and $(x,u(x))$ and the part of $\graph u$ that connects these two points lie in the same quadrant of the "thick coordinate system" at $x_0$. Hence shifting $x$ towards $x_0$ decreases the distance considered as long as $x$ is outside $K$.\par
Positivity and continuity of the infima considered are clear. 
\end{proof}
\begin{corollary}\label{co:normal}
Let $0 < T_{ex} \leq \infty$, $\rho > 0$. Let
\begin{align*}
u \in C_{loc}^{2;1}\left(\R^n\times [0,T_{ex})\right)\cap C_{loc}^0\left(\R^n\times [0,T_{ex})\right)
\end{align*}
be a strictly convex solution to \eqref{eq:graph}. Assume that $u(\cdot,0)$ fulfills the $\nu$-condition. Let $R > r > 0$. Then there exists $T = T(M_0,r,R) > 0$ with $T\rightarrow\infty$ as $R\rightarrow\infty$ such that $Du\left(B_r(0),t\right)\cap Du\left(R^{n+1} \setminus \overline{B_R(0)},0\right) = \emptyset$ for all $0 \leq t \leq \min\left\{T,T_{ex}\right\}$. This is equivalent to disjointness of the respective normal images.
\end{corollary}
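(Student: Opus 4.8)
\emph{The plan.} I would reduce the statement to tracking, for each ``far'' normal direction, the single point of $M_t$ realizing it, and then trap that point near its initial position by comparison with large spheres, using the $\nu$-condition to make the trap so large that it survives a long time. Since the downward Gauss map is $Du$ composed with the fixed homeomorphism $p\mapsto(p,-1)/\sqrt{1+|p|^2}$ of $\R^n$ onto the open lower hemisphere, disjointness of the gradient images is equivalent to disjointness of the normal images, and strict convexity of each $M_t$ makes every attained normal direction attained at a unique point. So it suffices to show: for every $y$ with $|y|\ge R$, writing $\omega:=\nu_0(y)$ and letting $q^\ast(t)$ be the point of $M_t$ with $\nu=\omega$ (if there is one), the base point of $q^\ast(t)$ stays outside $B_r(0)$ for $0\le t\le\min\{T,T_{ex}\}$, with $T=T(M_0,r,R)\to\infty$ as $R\to\infty$; at $t=0$ this is clear since $q^\ast(0)=(y,u_0(y))$.

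\emph{Passing to a flat picture.} Fix such a $y$ and use the coordinates obtained from the standard ones by the rigid motion carrying $p_y:=(y,u_0(y))$ to the origin and $\omega$ to $-e_{n+1}$. Convexity gives $\Omega_0\subseteq\{x^{n+1}\ge0\}$ with $M_0$ tangent to $\{x^{n+1}=0\}$ at the origin, and $\Omega_t\subseteq\Omega_0$ for all $t$ since the surface moves only upward ($H>0$). Because $M_0$ satisfies the $\nu$-condition, Lemma \ref{lm:comparisonIII} shows that for any prescribed radius and height, once $|y|\ge R$ is large $M_0$ is, in these coordinates, the graph over $B_L(0)$ of a function with arbitrarily small values and gradient; I let $L=L(R)\to\infty$ while also demanding $L(R)\le R-r$. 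Flatness of $M_0$ over $B_{L(R)}(0)$ lets one fit inside $\Omega_0$ a ball of radius comparable to $L(R)$, centered over the origin just above $\{x^{n+1}=0\}$ --- precisely the inclusion $B_{h+\epsilon/2}\big((0,h+\epsilon)\big)\subseteq\Omega_0\subseteq\{x^{n+1}\ge0\}$ required in Lemma \ref{lm:comparisonII}, valid as long as $h+\epsilon/2\le L(R)$. Since the radius $h$ that Lemma \ref{lm:comparisonII} produces for fixed $\epsilon,r_\epsilon,\rho,n$ grows only polynomially with $T$ (see the explicit relation in the proof of Lemma \ref{lm:comparison}), I would fix $\epsilon$ and $r_\epsilon$ and then choose $T=T(R)\to\infty$ slowly enough that $h+\epsilon/2\le L(R)$ persists. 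Lemma \ref{lm:comparisonII} then gives that, in these coordinates, $M_t\cap\big(B_{r_\epsilon}(0)\times[0,\epsilon]\big)$ is a graph with values in $[0,\epsilon]$ for $0\le t\le\min\{T(R),T_{ex}\}$. In particular, if $q^\ast(t)$ --- the unique point of $M_t$ with, in these coordinates, horizontal tangent plane --- lies over $B_{r_\epsilon}(0)$, then it is within distance $\sqrt{r_\epsilon^2+\epsilon^2}$ of $p_y$, so, using $L(R)\le R-r$ and $R$ large, its base point in the original frame has norm $\ge|y|-\sqrt{r_\epsilon^2+\epsilon^2}\ge r$. As this is uniform in $y$, the corollary follows with $T=T(R)\to\infty$.

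\emph{The remaining point, and the main obstacle.} What is left is to see that $q^\ast(t)$ does not escape $B_{r_\epsilon}(0)$ before time $T(R)$. At $t=0$ it sits at the origin; it moves continuously and, being an interior critical point of a convex graph, can never reach the vertical boundary of the graphed region; I would exclude its reaching $\partial B_{r_\epsilon}(0)$ by combining the lower barrier $M_0$ with the family of large evolving inscribed spheres --- which by the sphere formula in Lemma \ref{lm:comparison} survive up to a time of order $L(R)^{\rho+1}\to\infty$ and confine $M_t$ from below over the huge ball $B_{L(R)}(0)$ --- together with the $C^2$-estimates of Section \ref{sc:local} and the lower curvature bounds of Section \ref{sc:lower}, which pin the minimum of the graph near the origin. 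This migration control is the heart of the matter, and the hard part: a single inscribed sphere traps $q^\ast(t)$ only in a slab transverse to $\omega$, not in the tangential directions; what makes the tangential control work --- and forces $T(R)\to\infty$ --- is exactly that the $\nu$-condition renders $M_0$, hence (via Corollary \ref{co:comparison} and Lemmas \ref{lm:comparisonII}--\ref{lm:comparisonIII}) also $M_t$ for $t\le T(R)$, an almost-flat graph over the ever larger ball $B_{L(R)}(0)$.
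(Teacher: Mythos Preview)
Your setup --- rotated frame at the far point, sphere barriers via the $\nu$-condition, tracking the unique point $q^\ast(t)$ with normal $\omega$ --- is sound and shares ingredients with the paper. But you then aim for more than is needed: you try to show that $q^\ast(t)$ stays inside $B_{r_\epsilon}(0)$ in the rotated frame, and you correctly diagnose that the tangential migration is not controlled by the sphere barrier alone. Your proposed fix (invoke the $C^2$-estimates of Section~\ref{sc:local} and the lower bounds of Section~\ref{sc:lower}) is not an argument; nothing in those sections pins the minimum of a nearly flat convex graph to a fixed tangential location, and indeed for a very flat $M_0$ the minimum can drift a long way while remaining at small height. So as written there is a genuine gap exactly where you flag it.

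The paper sidesteps this entirely: it never tries to keep $q^\ast(t)$ near its starting point, only to keep it out of the cylinder $B_r(0)\times\R$. The missing idea is Lemma~\ref{lm:distance}: for $|x|>R$ and $|y|<r$ the distance from the tangent plane $T_xM_0$ to $(y,u_0(y))$ is bounded below by some $d=d(r,R)>0$, uniformly and continuously. In your rotated frame this says every point of $M_t$ with original base in $B_r(0)$ sits at height at least $d$ above $T_xM_0$ (since $u(y,t)\ge u_0(y)$ only pushes it higher). On the other hand the sphere barriers (Lemmas~\ref{lm:comparison}--\ref{lm:comparisonIII}, Corollary~\ref{co:comparison}) bound the rise $u(x,t)-u_0(x)$ by some $\delta(R,T)$, hence the height of $(x,u(x,t))$ above $T_xM_0$ is at most $\delta$; since $q^\ast(t)$ is the lowest point of $M_t$ in this frame, its height is $\le\delta$ as well. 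If $\delta<d$ then $q^\ast(t)$ cannot lie over $B_r(0)$. As $R\to\infty$ the $\nu$-condition makes $M_0$ arbitrarily flat near $x$, so $\delta<d$ persists for $T\to\infty$. This two-line convexity comparison replaces your entire ``remaining point'' and needs no control on where $q^\ast(t)$ actually wanders.
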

\begin{proof}
Let $M_0 := \graph u(\cdot,0)$. Fix $\vert x \vert > R$, $\vert y \vert < r$. The distance between the embedded tangent plane $T_xM_0$ and $(y,u(y))$ is positive. The considerations above, Lemma \ref{lm:comparison}-Corollary \ref{co:comparison}, ensure that $u(z,t)-u(z,0)$ is bounded for every $t \in [0,T]$ in terms of especially $R$ and $T$ if $z$ is close to $x$. If that distance corresponds to a distance which is smaller than the distance between the embedded tangent plane $T_xM_0$ and $(y,u(y))$, due to convexity we get $\nu(y,t) \neq \nu(x,0)$ and hence $Du(y,t) \neq Du(x,0)$. This establishes a lower bound on $t$ such that $Du(y,t) = Du(x,0)$. The Corollary follows as the estimates are uniform in $r$ and $R$.
\end{proof}

\subsection{Choosing Appropriate Coordinate Systems.} Let us describe a family of coordinate systems that is suitable for the interior $C^2$-estimates. Although a solution $M_t$ to $\frac{d}{dt}X = -H^\rho\nu$ is not known to exist at this stage, some of the following conditions involve the prospective solution $M_t$. This is then to be understood in the sense of a priori estimates, i.e. it has to be ensured that any graphical solution to $\frac{d}{dt} = -H^\rho\nu$ which exists on the time interval $[0,T]$ fulfills these properties.
\begin{definition}[\cite{su:gauss}]\label{df:systems}
Let $R > 0$, $T > 0$, $ G > 0$, $H > 0$. Let $u_0 \in C_{loc}^2\left(\R^n\right)$ and $M_t$ be a strictly convex solution to $\eqref{eq:mean}$. Then a family of coordinate systems $cs_p$ and corresponding coordinates $\left(\tilde{x}^1,\ldots,\tilde{x}^n\right)_p$, $p \in \S^{n-1}$, is a family of $(R,T,G,H)$-coordinate systems for $\left(M_t\right)_{0 \leq t \leq T}$ if the following conditions are fulfilled.
\begin{enumerate}[(i)]
\item $cs_p$ and the original coordinate system of $\R^{n+1}$ differ by a rotation about some point in $\R^{n+1}$.
\item We denote the orthogonal projection of $M_t \cap \left\{\tilde{x}^n<0\right\}$ to $\left\{\tilde{x}^n = 0\right\}$ by $\Omega_{p,t}$ and require that $\bigcup_{0\leq t\leq T} \Omega_{p,t} =: \Omega_p$ is bounded and $\Omega_{p,t} \cap \left(\left\{\lambda p : \lambda > 0\right\} \times \R\right) \neq \emptyset$, where the expressions on the right-hand side refer to quantities in the original coordinate system.
\item 
\begin{align*}
\bigcup_{p,t} M_t \cap \left\{\tilde{x}_p^n < 0\right\} \subset \left(\R^n \setminus B_R(0)\right) \times \R,
\end{align*}
where the expressions on the right-hand side refer once more to quantities in the original coordinate system.
\item In the coordinate system $cs_p$, for each $0 \leq t \leq T$, the set $M_t \cap \left\{\tilde{x}^n < 0\right\}$ can be written as $\graph \tilde{u}(\cdot,t)|_{\Omega_{p,t}}$, where $\tilde{u} \in C^{2;1}\left(\Omega_p \times [0,T]\right)$ is strictly convex.
\item In $\Omega_{p,t}$, we have the gradient estimate $\left\| D\tilde{u} \right\|_{L^\infty} \leq G$.
\item Define $U_p := \graph \tilde{u}|_{\left\{u < -H \right\}}$. Then there exists a constant $r \geq R$ such that
\begin{align*}
\graph u|_{\partial B_r(0)} \subset \bigcup_{p \in \S^{n-1}} U_p.
\end{align*}
\end{enumerate}
\end{definition}
The next Lemma describes a situation where a hypersurface can be represented as a graph with small gradient.
\begin{lemma}[\cite{su:gauss}]\label{lm:boundedness}
Let $M \subset \R^{n+1}$ be a complete convex hypersurface of class $C^2$. Let $r > 0$. Assume for $p,q \in M$ with $\vert p-q \vert < r$ that $\vert \nu(p) - \nu(q) \vert < \epsilon$ for some $\epsilon > 0$ to be chosen. Fix a coordinate system and $\Omega$ maximal such that $M$ is locally given as $\graph u|_\Omega$, $u \in C_{loc}^2(\Omega)$, where $u$ is convex, $0 > u(0) \geq -h$ for some $h > 0$ and $Du(0) = 0$. Let $0 < g \leq 1$. If $\epsilon = \epsilon(h,g,r) > 0$ is chosen sufficiently small, then
\begin{align*}
\vert Du \vert < g
\end{align*}
in $\Omega \cap \{u < 0\}$.
\end{lemma}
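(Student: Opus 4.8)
The plan is to argue by contradiction via a chaining argument along a polygonal path, essentially the same mechanism used in the proof of Lemma \ref{lm:comparisonIII}. Suppose $\vert Du \vert \geq g$ somewhere in $\Omega \cap \{u < 0\}$. Since $Du(0) = 0$ and $Du$ is continuous, by convexity the set $\{\vert Du \vert < g\}$ is a convex neighborhood of the origin, and we may pick a point $x_0$ in $\Omega \cap \{u < 0\}$ on the boundary of this set with $\vert Du(x_0) \vert = g$, choosing it with minimal $\vert x_0 \vert$ among such boundary points (so that the segment-like chain we build stays in the region where the gradient is controlled by $g$, hence normals vary slowly).

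Next I would estimate $\vert x_0 \vert$ from above. On the set where $\vert Du \vert \leq g \leq 1$ the graph has bounded slope, and because $u(0) \geq -h$ and $u < 0$ with $u$ convex, the vertical extent of $\graph u$ over this region is at most $h$; combined with the slope bound this forces $\vert x_0 \vert \leq C(h,g)$ for an explicit constant — roughly, $u$ cannot drop by more than $h$, and with gradient at least comparable to $g$ near $x_0$ the horizontal distance is bounded. Then I choose points $0 = p_0, p_1, \ldots, p_k = x_0$ along the segment from $0$ to $x_0$ with $\vert p_i - p_{i-1} \vert < r$ (so consecutive graph points are within distance $<r$ once we also use the slope bound to control the vertical increments, possibly after shrinking the step size) and $k \leq C(h,g,r)$. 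Applying the hypothesis $\vert \nu(p) - \nu(q) \vert < \epsilon$ to each consecutive pair and summing,
\begin{align*}
k\epsilon \geq \vert \nu(0,u(0)) - \nu(x_0,u(x_0)) \vert = \left\vert (0,\ldots,0,-1) - \frac{(Du(x_0),-1)}{\sqrt{1+\vert Du(x_0)\vert^2}} \right\vert \geq \frac{\vert Du(x_0)\vert}{\sqrt{1+\vert Du(x_0)\vert^2}} \geq \frac{g}{\sqrt{2}},
\end{align*}
using $\vert Du(x_0) \vert = g \leq 1$. This yields $\epsilon \geq g/(\sqrt{2}\,k) = \epsilon(h,g,r) > 0$, a lower bound depending only on $h, g, r$. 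Choosing $\epsilon$ strictly smaller than this quantity produces the contradiction, so $\vert Du \vert < g$ throughout $\Omega \cap \{u < 0\}$.

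The main obstacle is making the chaining step fully rigorous: one needs to know that the polygonal path from $0$ to $x_0$ stays inside $\Omega \cap \{u < 0\}$ and inside the region where the gradient estimate $\vert Du \vert < g$ holds (so that the slope bound used to pass from $\vert p_i - p_{i-1}\vert$ in $\R^n$ to $\vert (p_i,u(p_i)) - (p_{i-1},u(p_{i-1})) \vert < r$ in $\R^{n+1}$ is legitimate at every step), and that $\Omega$ being \emph{maximal} guarantees the path does not run off the domain before reaching $x_0$. Convexity of $u$ and of the sublevel set $\{\vert Du \vert \leq g\}$, together with the minimality in the choice of $x_0$, is exactly what secures this — along the segment $[0,x_0]$ the gradient norm stays below $g$, hence the vertical variation is controlled and the chain is admissible. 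The bound on $k$ then follows from the bound on $\vert x_0 \vert$, and the rest is the elementary normal-difference computation above.
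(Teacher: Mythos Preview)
There is a genuine gap, and it is precisely at the step where you claim $|x_0|\le C(h,g)$. Your justification reads: ``the vertical extent is at most $h$; combined with the slope bound this forces $|x_0|\le C(h,g)$.'' But on the segment $[0,x_0]$ you only have the \emph{upper} bound $|Du|\le g$, which yields $|u(x_0)-u(0)|\le g\,|x_0|$ --- an inequality that goes the wrong way and gives no control on $|x_0|$. A \emph{lower} bound on the slope along the whole segment would be needed, and there is none: near the origin $Du$ may be arbitrarily small, so the flat region $\{|Du|<g\}\cap\{u<0\}$ can have arbitrarily large diameter (indeed the remark following the lemma notes that $\{u\le 0\}$ can be large). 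Consequently your chain length $k$ is not bounded in terms of $(h,g,r)$ alone; in examples where the gradient grows as slowly as the $\nu$-condition permits, one gets $|x_0|\sim r g/\epsilon$, hence $k\sim g/\epsilon$, and the inequality $k\epsilon\ge g/\sqrt{2}$ is satisfied trivially rather than yielding a contradiction.

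The paper avoids this difficulty by replacing the straight chain from $0$ to $x_0$ with the gradient descent curve $\dot\gamma=-Du(\gamma)$, $\gamma(0)=x_0$, and working only on the portion where $|Du(\gamma)|\ge g/2$. Along this portion $u$ decreases at rate $|Du|^2\ge g^2/4$, while the total drop available is at most $h$; this bounds the \emph{time} $t_0$ spent there by $4h/g^2$ --- a bound in $(h,g)$ alone, with no reference to $|x_0|$. On the other hand, the $\nu$-condition (converted to a Lipschitz-type bound on $Du$) forces $t_0\gtrsim r/\epsilon$, because $|Du|$ can drop from $g$ to $g/2$ only slowly. Comparing the two bounds gives $\epsilon\gtrsim r g^2/h$, the desired contradiction. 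If you want to salvage your chaining idea, you would have to chain not from $0$ but from $x_0$ back to the first point where $|Du|=g/2$, and bound the length of \emph{that} piece via the drop in $u$; this is essentially the gradient-flow argument in disguise.
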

It follows from the proof that it suffices that the condition on $\nu$ is fulfilled in the set $\graph u|_{\left\{u\leq 0\right\}}$. Observe that $\{u \leq 0\}$ can be large.
\begin{proof}
Assume that the conclusion was false. Fix $h_g < 0$ minimal such that $\vert Du \vert = g$ somewhere on $\partial \Omega_g \equiv \partial \left\{x \in \Omega : u < h_g\right\}$. Choose $x_0 \in \partial\Omega_g$ with $\vert Du \vert(x_0) = g$. Observe that $\Omega_g$ is convex.\par
 We claim that there exists $\tilde{r} = \tilde{r}(r) > 0$ and $\tilde{\epsilon} = \tilde{\epsilon}(\epsilon)$ with $\tilde{\epsilon}(\epsilon) \rightarrow 0$ as $\epsilon \rightarrow 0$ such that $\vert Du(x) - Du(y) \vert < \tilde{\epsilon}$ for any $x, y \in \Omega_g$ such that $\vert x-y \vert < \tilde{r}$. Let $p = (x,u(x))$ and $q = (y,u(y))$. We obtain
\begin{align*}
\vert p-q \vert^2 &\, = \vert x-y \vert^2 + \vert u(x) - u(y) \vert^2 \\
&\, = \vert x-y \vert^2 + \left| \int_0^1 \left\langle Du(\tau x + (1 - \tau)y),x-y \right\rangle d\tau \right|^2\\
&\, \leq \vert x-y \vert^2 + g^2\vert x - y \vert^2 \leq 2 \vert x-y \vert^2.
\end{align*}
As long as $\nu$ is uniformly strictly contained in the lower hemisphere of $\S^n$ or, equivalently, $\vert Du \vert^2$ is uniformly bounded, the map $\nu = \frac{(-Du,1)}{\sqrt{1+\vert Du \vert^2}} \mapsto Du$ is continuous.
Combining this with the assumption on the normal, the claim follows. In order to simplify the notation, we will assume in the following that $\vert Du(x) - Du(y) \vert < \epsilon$ for any $x,y \in \Omega_g$ with $\vert x-y \vert < r$.\par
Consider the solution $\gamma: [0,\infty) \to \Omega_g$ of the initial value problem
\begin{align*}
\begin{cases}
  \dot{\gamma} = -Du(\gamma(t)), \quad t \geq 0,\\
  \gamma(0) = x_0.
\end{cases}
\end{align*}
It is clear that $\gamma$ exists for all $t$ and $\gamma(t) \rightarrow 0$ for $t \rightarrow \infty$.\par
We will see that $\gamma$ stays for a long interval $[0,t_0]$ in the set $\{\vert Du \vert \geq g/2\}$ which contradicts the boundedness of $u$ from below: Fix $t_0 > 0$ minimal such that $\vert Du \vert(\gamma(t_0)) = g/2$. We obtain
\begin{align*}
u(\gamma(t_0)) - u(\gamma(0)) = \int_0^{t_0} \left\langle Du(\gamma(\tau)),\dot{\gamma}(\tau)\right\rangle d\tau = -\int_0^{t_0} \vert Du \vert^2(\gamma(\tau)) d\tau
\end{align*}
and hence
\begin{align}\label{eq:boundedness}
h \geq \vert u(\gamma(t_0)) - u(\gamma(0)) \vert = \int_0^{t_0} \vert Du \vert^2(\gamma(\tau)) d\tau \geq t_0 \frac{g^2}{4}.
\end{align}
On the other hand, $\gamma(t) \in \Omega_g$ for $t > 0$ and $\vert Du \vert \leq g$ in $\Omega_g$ by definition. This implies that $\vert \dot{\gamma}(t) \vert \leq g$. We get $\vert \gamma(t_1)-\gamma(t_2) \vert < r$ for $0 < t_1, t_2$ with $\vert t_1 - t_2 \vert < r/g$. Our assumption on the gradient implies that
\begin{align*}
\vert Du(\gamma(t_1)) - Du(\gamma(t_2)) \vert < \epsilon \quad \text{for } \vert t_1 - t_2 \vert < \frac{r}{g}.
\end{align*} 
By induction we deduce that
\begin{align*}
\vert Du(\gamma(0)) - Du(\gamma(t))\vert < (k+1)\epsilon \quad \text{for } 0 \leq t \leq k\frac{r}{g}
\end{align*}
for any $k > 0$. In the following, we will assume that $0 < \epsilon < g/4$. We fix $k := \frac{g}{2\epsilon} - 1$ and obtain
\begin{align*}
\vert Du(\gamma(0)) - Du(\gamma(t)) \vert < \frac{g}{2} \quad \text{for } 0 \leq t \leq \frac{t}{4\epsilon}.
\end{align*}
Hence $t_0 > \frac{r}{4\epsilon}$. Now \eqref{eq:boundedness} implies that $h > \frac{rg^2}{16\epsilon}$. This is impossible for $0 < \epsilon \leq \frac{rg^2}{16h}$.
\end{proof}
Now we can show the existence of coordinate systems fulfilling Definition \ref{df:systems}.
\begin{lemma}\label{lm:coordinate}
Let $u \in C_{loc}^2(\R^n)$ be strictly convex. Assume that $u$ fulfills the $\nu$-condition. Let $R > 0$, $T > 0$ and $G > 0$. Then there exists $H > 0$ and a family of $(R,T,G,H)$-coordinate systems for a continuous family $\left(M_t\right)_{0\leq t < \infty}$ of convex hypersurfaces solving \eqref{eq:mean} with $\graph u = M_0$, which is smooth for $t>0$.
\end{lemma}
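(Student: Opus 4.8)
The strategy is to build the family $cs_p$, $p \in \S^{n-1}$, more or less by hand, using rotations that move a chosen far-away point of $M_0$ into the position $-H e_n$ (with the flow then running beneath a suitable horizontal slab), and to verify the six conditions of Definition \ref{df:systems} one by one, leaning on Lemma \ref{lm:comparisonIII} and Lemma \ref{lm:boundedness} for the gradient control and on Lemma \ref{lm:comparisonII}, Corollary \ref{co:comparison} and Corollary \ref{co:normal} for the time-dependent statements. First I would invoke Theorem \ref{th:graph} (and the $\nu$-condition applied with a small $\epsilon$) to see that for each direction $p$ there is a point $x_p \in M_0$ with $\nu(x_p)$ pointing roughly in direction $p$ and $|x_p|$ as large as we please; choose the rotation $R_p$ (about a point of $\R^{n+1}$) so that in the new coordinates $x_p$ sits at a point $(0,\dots,0,-H)$ with downward-ish normal $\approx -e_n$, i.e. $M_0$ locally looks like a graph over $\{\tilde x^n = 0\}$ with zero gradient at the contact point. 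Since we may pick $|x_p|$ uniformly large over all $p$ (the sphere $\S^{n-1}$ is compact and $\nu : M_0 \to \S^n_-$ is, by strict convexity and Theorem \ref{th:graph}, onto an open hemisphere), condition (i) and (iii) come for free once $H$, $R$ are coordinated, and (vi) follows because as $p$ ranges over $\S^{n-1}$ the caps $U_p$ cover the normal image of a whole sphere $\graph u|_{\partial B_r(0)}$ for suitable $r \geq R$.

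Next comes the gradient bound (v), which is the technical heart. For fixed $p$, in the coordinates $cs_p$ the surface $M_0$ is, near the contact point, $\graph \tilde u_0$ with $\tilde u_0(0) = -H$, $D\tilde u_0(0) = 0$, $\tilde u_0$ convex; Corollary \ref{co:comparison} says $M_0$, hence every $M_t$, satisfies the $\nu$-condition, so by choosing $|x_p|$ (equivalently $R$) large the hypotheses of Lemma \ref{lm:boundedness} hold with an $\epsilon = \epsilon(H,G,1)$ as small as we wish on the relevant piece $\graph \tilde u_0|_{\{\tilde u_0 \leq 0\}}$. Lemma \ref{lm:boundedness} then gives $|D\tilde u_0| < G$ on $\Omega_{p,0}$. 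To propagate this to $0 \leq t \leq T$, I would use Lemma \ref{lm:comparisonII}: the flow starting from $M_0$ stays, on $[0,T]$, inside a fixed slab and within bounded vertical distance of $M_0$ over the relevant compact projection, so $M_t \cap \{\tilde x^n < 0\}$ is still a graph $\tilde u(\cdot,t)$, still convex (strict convexity is preserved by Theorem \ref{th:graph} type reasoning together with the fact that \eqref{eq:graph} is parabolic precisely on strictly convex $u$), and Lemma \ref{lm:boundedness} — whose hypothesis only concerns the normal oscillation on $\{\tilde u \leq 0\}$, which is controlled uniformly in $t$ by Corollary \ref{co:comparison} — again yields $|D\tilde u(\cdot,t)| < G$. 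This simultaneously gives (iv) and (v), with $\tilde u \in C^{2;1}(\Omega_p \times [0,T])$ from parabolic regularity. Condition (ii), boundedness of $\Omega_p = \bigcup_t \Omega_{p,t}$ and the requirement $\Omega_{p,t}\cap(\{\lambda p:\lambda>0\}\times\R)\neq\emptyset$, follows from Lemma \ref{lm:distance}: since $u \to \infty$ at infinity (this is where strict convexity plus the $\nu$-condition is used — a strictly convex entire graph satisfying the $\nu$-condition is not a cylinder, so it is proper and coercive), the region below the tilted hyperplane $\{\tilde x^n = 0\}$ projects to a bounded set, and the contact ray through $x_p$ witnesses the non-emptiness.

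Finally, the existence of the family $(M_t)_{0 \leq t < \infty}$ itself: I would obtain it as a limit of the solutions to the closed, strictly convex problems of Huisken–Schulze (\cite{gh:flow,fs:evolution}) applied to large convex caps exhausting $M_0$ — each cap flows smoothly until it shrinks to a point, and the time to shrink tends to $\infty$ as the caps grow, so on any finite interval $[0,T]$ we get uniform interior estimates (the local $C^2$-bounds of Section \ref{sc:local}, which apply once we have the $(R,T,G,H)$-coordinate systems in hand) and can pass to a subsequential limit; convexity and the $\nu$-condition are preserved by Corollary \ref{co:comparison}. There is a mild circularity to untangle here — the coordinate systems are used to prove the estimates that give existence, while the definition of the coordinate systems refers to a solution — but as the remark before Definition \ref{df:systems} explains, this is read as an a priori estimate: one runs the argument for the approximating closed flows, for which everything is legitimate, and the uniformity lets the limit inherit the structure. \textbf{The main obstacle} I anticipate is precisely making conditions (iv) and (v) hold \emph{uniformly in $t \in [0,T]$ and in $p \in \S^{n-1}$ simultaneously}: one must choose $R$ (how far out the contact points sit) and $H$ (the depth of the slab) and the $\nu$-condition radius $\epsilon$ in the right order so that Lemma \ref{lm:boundedness}'s smallness requirement $\epsilon \leq \epsilon(H,G,1)$ is met for \emph{every} direction at \emph{every} time, using only the single $\epsilon(r)$ furnished by the $\nu$-condition of $M_0$ — this is a bookkeeping issue of quantifier order rather than a new idea, but it is where the proof must be written carefully.
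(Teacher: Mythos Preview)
Your plan is broadly the same as the paper's: take for each direction a tangent plane to $M_0$ at a far-out point, shift it along the normal to serve as $\{\tilde x^{n+1}=0\}$, and then read off conditions (i)--(vi) from Lemma~\ref{lm:boundedness}, Lemma~\ref{lm:comparisonII}, Corollary~\ref{co:comparison}, and Lemma~\ref{lm:distance}.  However, you have Lemma~\ref{lm:distance} attached to the wrong condition, and this is precisely the step you flag as ``$H$, $R$ are coordinated'' without saying how.

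In the paper, Lemma~\ref{lm:distance} is what \emph{defines} $H$: one first enlarges $R$ so that Lemma~\ref{lm:boundedness} applies on $\R^n\setminus B_R(0)$, and then takes $2H$ to be a positive lower bound (furnished by Lemma~\ref{lm:distance}) for the distance from any embedded tangent plane $T_xM_0$ with $|x|>2R$ to any point $(y,u(y))$ with $|y|\le R$.  The new hyperplane $\{\tilde x^{n+1}=0\}$ is the tangent plane at such an $x$, shifted by $-2H$ along $\nu$.  With this specific choice, condition (iii) is immediate: every point of $M_0$ with $\tilde x^{n+1}<0$ lies within distance $<2H$ of the tangent plane, hence cannot project into $B_R(0)$.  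Your treatment of (iii) as ``for free'' glosses over exactly this, and without it there is no mechanism preventing the cap $M_t\cap\{\tilde x^{n+1}<0\}$ from spilling over $B_R(0)\times\R$ as you vary $H$.  Conversely, condition (ii) does \emph{not} need Lemma~\ref{lm:distance}: boundedness of $\Omega_{p,0}$ follows directly from strict convexity (the region between a strictly convex graph and one of its shifted tangent planes is a bounded cap), and $\Omega_{p,t}\subset\Omega_{p,0}$ because the flow moves monotonically upward.  Once you swap these two attributions, your ``main obstacle'' about quantifier order dissolves: $R$ is fixed first (large enough for Lemma~\ref{lm:boundedness}), then $H$ is read off from Lemma~\ref{lm:distance}, and the $\nu$-condition together with Lemma~\ref{lm:comparisonII} handles the time-uniformity in (vi).  The discussion of existence and circularity is not needed here; as noted before Definition~\ref{df:systems}, the lemma is to be read as an a priori statement, and the actual limit construction belongs to Section~\ref{sc:longtime}.
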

\begin{proof}
Note that it suffices to fix $\left\{\tilde{x}^{n+1} = 0\right\}$ and $\tilde{e}_{n+1}$ for the new coordinate system.\par
Assume that $R > 0$ is so large that Lemma \ref{lm:boundedness} is applicable in $\R^n \setminus B_R(0)$. According to Lemma \ref{lm:distance}, the distance between any embedded tangent plane to $M_0 = \graph u$ at $(x,u(x))$ with $\vert x \vert > 2 R$ and $(y,u(y))$ with $\vert y \vert \leq R$ is bounded below by $2H$ for some $H > 0$. We claim that we can choose coordinate systems such that $\left\{\tilde{x}^{n+1} = 0\right\}$ is given by the tangent plane to $M_0$ at $(x,u(x))$ with $\vert x \vert > 2R$, shifted by $-2H$ in the direction of $\nu((x,u(x)))$ so that $\tilde{e}_{n+1} = -\nu((x,u(x)))$. Then (i) is clear, (ii) follows from the strict convexity of $u$ at $t = 0$ as $M_t$ is convex and moves only upwards. Condition (iii) is fulfilled by our choice of $H$. Lemma \ref{lm:boundedness} ensures that conditions (iv) and (v) are fulfilled. The $\nu$-condition and Lemma \ref{lm:comparisonII} yield condition (vi).
\end{proof}

\section{Lower Velocity Bounds}\label{sc:lower}

We want to check, that we can apply a Harnack inequality. For $F = F(\lambda_i)$, $\lambda_i > 0$, we define
\begin{align*}
\Phi\left(\kappa_i\right) := -F\left(\kappa_i^{-1}\right).
\end{align*}
We say that $\Phi$ is $\alpha$-concave, if $\Phi = $ sign$\,\alpha\,B^\alpha$ for some $B$, where $B$ is positive and concave. The function $\Phi$ is called the dual function to $F$. 
\begin{lemma}
Let $\rho > 0$. The dual function to $F = H^\rho = \left(\lambda_1 + \ldots + \lambda_n\right)^\rho$ is $-\rho$-concave.
\end{lemma}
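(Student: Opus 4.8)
The plan is to compute the dual function explicitly and then exhibit the required positive concave $B$. Unwinding the definitions, for $F = H^\rho = (\lambda_1+\dots+\lambda_n)^\rho$ one gets, on the positive cone $\{\kappa_i>0\}$,
\[
\Phi(\kappa_i) = -F(\kappa_i^{-1}) = -\Bigl(\textstyle\sum_{i=1}^n \kappa_i^{-1}\Bigr)^\rho .
\]
Since $-\rho<0$, being $-\rho$-concave means $\Phi = \sgn(-\rho)\,B^{-\rho} = -B^{-\rho}$ for some positive concave $B$; comparing with the displayed formula forces
\[
B(\kappa) = \Bigl(\textstyle\sum_{i=1}^n \kappa_i^{-1}\Bigr)^{-1},
\]
which is $\tfrac1n$ times the harmonic mean of $\kappa_1,\dots,\kappa_n$. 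So the lemma reduces to checking that this $B$ is positive and concave on $\{\kappa_i>0\}$.

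Positivity is immediate. For concavity, the key observation is the variational identity
\[
B(\kappa) = \inf\Bigl\{\, \textstyle\sum_{i=1}^n \kappa_i x_i^2 \ :\ x_i \geq 0,\ \sum_{i=1}^n x_i = 1 \,\Bigr\},
\]
which I would prove by minimizing the (positive definite) quadratic form $\sum_i \kappa_i x_i^2$ over the simplex via Lagrange multipliers — the minimizer is $x_i = \kappa_i^{-1}/\sum_j \kappa_j^{-1}$ with minimal value $\bigl(\sum_j \kappa_j^{-1}\bigr)^{-1}$ — or, equivalently, from Cauchy--Schwarz, $1 = \bigl(\sum_i x_i\bigr)^2 = \bigl(\sum_i \kappa_i^{1/2} x_i \cdot \kappa_i^{-1/2}\bigr)^2 \leq \bigl(\sum_i \kappa_i x_i^2\bigr)\bigl(\sum_i \kappa_i^{-1}\bigr)$, with equality when $x_i \propto \kappa_i^{-1}$. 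Granting this identity, for each fixed admissible $x$ the map $\kappa \mapsto \sum_i \kappa_i x_i^2$ is affine, hence concave, and an infimum of concave functions is concave; therefore $B$ is concave, which is exactly the claim.

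The only real obstacle is spotting the infimum representation; once it is available, concavity is purely formal and no computation is needed. An alternative would be to differentiate $B$ twice and verify directly that its Hessian is negative semidefinite, but that is messier and less transparent than the variational argument. I would close by noting that this identifies $F=H^\rho$ as a speed whose dual has the concavity required to invoke the Harnack inequality used in Section~\ref{sc:lower}.
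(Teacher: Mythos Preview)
Your proof is correct. You identify $B(\kappa)=\bigl(\sum_i\kappa_i^{-1}\bigr)^{-1}$ and establish its concavity via the variational formula $B(\kappa)=\inf\bigl\{\sum_i\kappa_ix_i^2:x_i\ge0,\ \sum_ix_i=1\bigr\}$, which follows from Cauchy--Schwarz and exhibits $B$ as an infimum of linear functions of $\kappa$. The paper instead takes exactly the route you set aside as ``messier'': it computes $\Phi_i$ and $\Phi_{ij}$ explicitly, invokes the criterion from \cite{ba:harnack} to reduce $-\rho$-concavity to positive semidefiniteness of the matrix $2\rho\Delta^{\rho-2}\bigl(-\lambda_i^{-2}\lambda_j^{-2}+\Delta\lambda_i^{-3}\delta_{ij}\bigr)$ with $\Delta=\sum_i\lambda_i^{-1}$, and then checks this by evaluating all leading principal minors in closed form. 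Your argument is shorter and more conceptual, since it isolates concavity of the harmonic mean as the sole nontrivial ingredient and dispatches it without differentiation; the paper's computation, by contrast, keeps the link to Andrews' general $\alpha$-concavity framework fully explicit and yields the exact Hessian combination, which is what one would want if quantitative control (rather than a sign) were later needed.
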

\begin{proof}
We obtain the non-zero terms
\begin{align*}
\Phi &\,= -\Delta^\rho,\\
\Phi_i &\, = \frac{\partial \Phi}{\partial \lambda_i} =
\rho\Delta^{\rho-1}\lambda_i^{-2},\\
\Phi_{i,j} &\, = \frac{\partial^2\Phi}{\partial \lambda_i \partial \lambda_j} = -\rho\Delta^{\rho-2}\left(\left(\rho-1\right)\lambda_i^{-2}\lambda_j^{-2} + 2\Delta\lambda_i^{-3}\delta_{ij}\right),
\end{align*}
where $\Delta \equiv \frac{1}{\lambda_1} + \ldots + \frac{1}{\lambda_n}$.\par
According to \cite{ba:harnack}, we have to prove that
\begin{align*}
0 \leq 2\rho\Delta^{\rho-2}\left(-\lambda_i^{-2}\lambda_j^{-2} + \Delta\lambda_i^{-3}\delta_{ij}\right).
\end{align*}
Its principal minors are
\begin{align*}
\frac{\left(2\rho\Delta^{\rho-1}\right)^{k}}{\Delta\left(\Pi_{i=1}^k\frac{1}{\lambda_i}\right)^3}\left(\Delta - \sum_{i=1}^k \frac{1}{\lambda_i} \right) > 0\qquad\text{for } k \leq n
\end{align*}
and the positive definity follows.
\end{proof}
We can apply the Harnack inequality derived in \cite[Theorem 5.17]{ba:harnack}. For our geometric evolution equation \eqref{eq:mean} it reads
\begin{theorem}
Assume that $0 < t_1 < t_2$. Then for smooth compact strictly convex hypersurfaces moving according to \eqref{eq:mean} we have for all $p \in \S^n$ that
\begin{eqnarray}\label{eq:harnack}
\frac{F\left(\nu^{-1}(p),t_2\right)}{F\left(\nu^{-1}(p),t_1\right)} \geq \left(\frac{t_1}{t_2}\right)^{\frac{\rho}{\rho+1}}.
\end{eqnarray}
\end{theorem}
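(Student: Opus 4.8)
The plan is to deduce the Harnack inequality \eqref{eq:harnack} directly from the abstract differential Harnack estimate of \cite[Theorem 5.17]{ba:harnack}, using that the dual function $\Phi$ to $F=H^\rho$ is $-\rho$-concave (the preceding Lemma). First I would recall the setup of \cite{ba:harnack}: for a flow $\dot X=-F\nu$ with $\Phi$ being $\sgn\alpha\,B^\alpha$ for $B$ positive and concave, the relevant quantity satisfying a Li--Yau type differential inequality is $\partial_t(t^{-\theta}F)\ge 0$ for a suitable exponent $\theta$ depending on $\alpha$, where the normal direction $p\in\S^n$ is held fixed (so that one differentiates $F$ along the fixed ray in the Gauss map parametrization). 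For the pointwise-in-$\nu$ formulation this is exactly the statement that $t\mapsto t^{-\theta}F(\nu^{-1}(p),t)$ is nondecreasing.

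The key step is to identify the exponent. With $\Phi$ being $(-\rho)$-concave, i.e. $\alpha=-\rho$, Andrews' normalization gives $\theta=\tfrac{\alpha}{\alpha-1}=\tfrac{-\rho}{-\rho-1}=\tfrac{\rho}{\rho+1}$; equivalently one checks directly that $F=H^\rho$ is homogeneous of degree $\rho$ in the $h_{ij}$, hence of degree $\rho$ in the principal curvatures, and the self-similar shrinking sphere solution $r(t)=\bigl(r(0)^{\rho+1}-(\rho+1)(n-1)^\rho t\bigr)^{1/(\rho+1)}$ from the proof of Lemma \ref{lm:comparison} has $H^\rho=\bigl((n-1)/r(t)\bigr)^\rho$, which blows up like $(T_0-t)^{-\rho/(\rho+1)}$; this forces the Harnack exponent to be $\rho/(\rho+1)$, since the Harnack quantity must be constant on the soliton. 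Having fixed $\theta=\rho/(\rho+1)$, monotonicity of $t\mapsto t^{-\theta}F(\nu^{-1}(p),t)$ on $(0,\infty)$ gives
\begin{align*}
t_1^{-\theta}F\left(\nu^{-1}(p),t_1\right)\le t_2^{-\theta}F\left(\nu^{-1}(p),t_2\right),
\end{align*}
which rearranges to $\dfrac{F(\nu^{-1}(p),t_2)}{F(\nu^{-1}(p),t_1)}\ge\left(\dfrac{t_2}{t_1}\right)^{\theta}=\left(\dfrac{t_1}{t_2}\right)^{-\theta}$. Since $-\theta=-\rho/(\rho+1)$, writing it with $t_1/t_2\le 1$ and a nonnegative exponent on the right gives precisely \eqref{eq:harnack}.

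The main obstacle is purely bookkeeping: \cite{ba:harnack} states the Harnack estimate for flows written in the form $\dot X=-\Phi^{-1}$-type speeds, or in terms of the support function, and one must match conventions — in particular the sign of $\alpha$, whether concavity is assumed for $\Phi$ or for its reciprocal, and the exact power of $t$ appearing in Theorem 5.17 — with our convention $\dot X=-H^\rho\nu$ and our definition $\Phi(\kappa_i)=-F(\kappa_i^{-1})$. The preceding Lemma has already done the analytic work of verifying the concavity hypothesis (positive definiteness of the relevant matrix via its principal minors), so once the translation of hypotheses is in place the result is immediate; compactness and strict convexity are exactly the standing assumptions of \cite[Theorem 5.17]{ba:harnack}, so no further regularity discussion is needed.
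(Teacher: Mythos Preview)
Your overall approach coincides with the paper's: the theorem is not proved from scratch there either, it is obtained by invoking \cite[Theorem~5.17]{ba:harnack} once the preceding Lemma has verified that the dual function of $F=H^\rho$ is $(-\rho)$-concave. So strategically you are doing exactly what the paper does.

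However, your added computation of the exponent contains a sign slip that makes the displayed chain of inequalities false. For contracting flows in Andrews' framework the differential Harnack (in the Gauss-map parametrisation, with normal direction fixed) reads
\[
\partial_t F\bigl(\nu^{-1}(p),t\bigr)\;\ge\;-\,\frac{\rho}{\rho+1}\,\frac{F\bigl(\nu^{-1}(p),t\bigr)}{t},
\]
so the nondecreasing quantity is $t^{+\theta}F$, not $t^{-\theta}F$, with $\theta=\rho/(\rho+1)$. Integrating the correct inequality from $t_1$ to $t_2$ gives $t_1^{\theta}F(\cdot,t_1)\le t_2^{\theta}F(\cdot,t_2)$ and hence $F(\cdot,t_2)/F(\cdot,t_1)\ge (t_1/t_2)^{\theta}$, which is precisely \eqref{eq:harnack}. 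With your sign you instead obtain $F(\cdot,t_2)/F(\cdot,t_1)\ge (t_1/t_2)^{-\theta}$, a strictly stronger (and in general false) statement; the sentence ``writing it with $t_1/t_2\le 1$ and a nonnegative exponent on the right gives precisely \eqref{eq:harnack}'' does not repair this, since $(t_1/t_2)^{-\theta}\neq (t_1/t_2)^{\theta}$. Relatedly, your heuristic that the Harnack quantity is \emph{constant} on a shrinking sphere starting smoothly at $t=0$ is not correct: such a sphere satisfies the inequality strictly, and cannot be used to read off the sign of the exponent. Once you flip the sign of $\theta$ in the monotone quantity, your write-up matches the paper's derivation.
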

\begin{lemma}
Let $T > 0$, $\rho > 0$, $0 < \beta < 1$. Let $u_0 \in C_{loc}^{2,\beta}(\R^n)$ be strictly convex. Let
\begin{align}
u \in C_{loc}^{2;1}(\R^n \times (0,T)) \cap C_{loc}^0(\R^n \times [0,T])
\end{align}
be a strictly convex solution to \eqref{eq:graph}, obtained as a locally uniform limit of closed hypersurfaces which contain $\graph u_0|_{B_{r_i}(0)}$ initially and lie above $\graph u_0$. Then for every $x \in \R^n$ and every $t_x \in [0,T]$, there exists a positive lower bound $H_x > 0$ which is continuous in $x$ and depends only on $t_x$ and $u_0$ in some neighborhood of $x$, such that for the point $y$ with $\nu((y,u(y,t_x))) = \nu((x,u_0(x)))$, we have $H(y,t_x) \geq H_x > 0$.
\end{lemma}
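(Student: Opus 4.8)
The plan is to exploit the Harnack inequality \eqref{eq:harnack} together with a lower bound on the velocity at the approximating closed hypersurfaces, and then pass to the limit. Recall that $u$ is obtained as a locally uniform limit of closed strictly convex hypersurfaces $\Sigma_t^{(i)}$ which at time $t=0$ contain $\graph u_0|_{B_{r_i}(0)}$ and which lie above $\graph u_0$ for all $t$. Fix $x \in \R^n$ and $t_x \in [0,T]$. Let $p := \nu((x,u_0(x))) \in \S^n$ be the (downward) unit normal of $\graph u_0$ at $x$; since $u_0 \in C^{2,\beta}_{loc}$ is strictly convex, the curvature $H((x,u_0(x)))$ computed from $u_0$ is a well-defined positive number, continuous in $x$, depending only on $u_0$ near $x$. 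Call this number $H_0(x) > 0$.

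\textbf{Step 1: velocity at the tangent point of the approximating surfaces.} For each $i$ large enough, the closed hypersurface $\Sigma_t^{(i)}$ starts above $\graph u_0$ and touches it from above — more precisely, at $t=0$ the supporting hyperplane of $\Sigma_0^{(i)}$ with outer normal $p$ lies no lower than the tangent plane to $\graph u_0$ at $(x,u_0(x))$. By the convexity comparison for the support functions (or directly: the point of $\Sigma_0^{(i)}$ with normal $p$ lies on or above $(x,u_0(x))$, and $\Sigma_0^{(i)}$ lies above $\graph u_0$ near there), the principal curvatures of $\Sigma_0^{(i)}$ at the point with normal $p$ are bounded below in terms of those of $\graph u_0$ at $x$ — this uses that a convex surface lying above another and sharing a tangent plane at a point has curvature at least as large there, in the sense of the second fundamental form as a function of the normal. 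Hence $F$ at the normal-$p$ point of $\Sigma_0^{(i)}$ is $\geq H_0(x)^\rho/2$ once $i$ is large.

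\textbf{Step 2: Harnack in time on the approximating surfaces, then limit.} Since $\Sigma_t^{(i)}$ is a smooth compact strictly convex solution of \eqref{eq:mean} for $t > 0$, and $F = H^\rho$ has $-\rho$-concave dual (proved above), the Harnack inequality \eqref{eq:harnack} applies: for $0 < t_1 < t_2 \leq T$,
\begin{equation*}
F\left((\nu^{(i)})^{-1}(p),t_2\right) \geq \left(\frac{t_1}{t_2}\right)^{\frac{\rho}{\rho+1}} F\left((\nu^{(i)})^{-1}(p),t_1\right).
\end{equation*}
Letting $t_1 \to 0^+$ and using Step 1, we get $F((\nu^{(i)})^{-1}(p),t_x) \geq (t_1/t_x)^{\rho/(\rho+1)}\cdot H_0(x)^\rho/2$ for every $t_1 \in (0,t_x)$; to get a genuinely positive bound independent of $t_1$ one instead applies the Harnack at a fixed comparison time. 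The cleaner route: since $\Sigma^{(i)}_0 \supset \graph u_0|_{B_{r_i}(0)}$ and lies above $\graph u_0$, a short-time continuity/openness argument (or a direct barrier from below by a large sphere inscribed using strict convexity of $u_0$ near $x$) shows $F((\nu^{(i)})^{-1}(p),s_0) \geq c(x) > 0$ for some fixed $s_0 \in (0,t_x]$ and all large $i$, with $c(x)$ continuous in $x$ and depending only on $u_0$ near $x$; then \eqref{eq:harnack} with $t_1 = s_0$, $t_2 = t_x$ (or $t_1 = t_x, t_2 = s_0$ if $t_x < s_0$) gives the bound at $t_x$. Finally, the limiting surface $\graph u(\cdot,t_x)$ is the locally uniform limit of the $\Sigma^{(i)}_{t_x}$, so the point $(\nu^{(i)})^{-1}(p)$ converges to the point $y$ of $\graph u(\cdot,t_x)$ with $\nu((y,u(y,t_x))) = p$, and by $C^{2}_{loc}$-convergence (the local $C^2$-estimates of Section \ref{sc:local}) the curvature passes to the limit: $H(y,t_x) \geq H_x > 0$ with $H_x$ continuous in $x$ and depending only on $t_x$ and $u_0$ near $x$.

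\textbf{Main obstacle.} The delicate point is Step 1: obtaining a lower bound on the velocity of the \emph{approximating} closed surfaces at the normal-$p$ point, uniformly in $i$, from information about $u_0$ near $x$ alone. This requires that the approximation be done so that $\Sigma_0^{(i)}$ not only lies above $\graph u_0$ and contains $\graph u_0|_{B_{r_i}(0)}$ but actually agrees with $\graph u_0$ on that ball (or is tangent of high enough order), so that the second fundamental form at the relevant point is controlled below; alternatively one inscribes, using the strict convexity and the $C^{2,\beta}$ regularity of $u_0$ near $x$, a fixed sphere touching $\graph u_0$ from below at $(x,u_0(x))$ whose evolved copy stays below all $\Sigma^{(i)}_t$ for small $t$, yielding the velocity lower bound by comparison of support functions. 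Verifying that the constant so produced is continuous in $x$ and genuinely local in $u_0$ is then routine but must be stated carefully.
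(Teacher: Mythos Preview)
Your high-level plan---work on the closed approximations, use the Harnack inequality \eqref{eq:harnack}, pass to the limit---is exactly right, and you correctly isolate the real difficulty: Harnack only compares \emph{positive} times, so knowing $F$ at $t=0$ (where it equals $H_0(x)^\rho$ because the approximating surface agrees with $\graph u_0$ near $x$) is by itself useless, since $(t_1/t_2)^{\rho/(\rho+1)}\to 0$ as $t_1\to 0$. What is missing is a mechanism, uniform in the approximation index, that produces a lower bound on $F$ at some \emph{fixed} positive time.

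Neither of your two suggestions supplies this. The ``short-time continuity/openness'' route is circular: uniform-in-$i$ continuity of $F$ down to $t=0$ would require uniform higher regularity (Krylov--Safonov/Schauder) on the approximations, but that in turn needs uniform parabolicity, i.e.\ precisely the lower velocity bound you are trying to prove (compare how Section~\ref{sc:longtime} invokes this lemma \emph{before} Schauder theory). The ``sphere touching from below'' route gives the wrong inequality: a convex hypersurface lying above a sphere and tangent to it has principal curvatures \emph{at least} those of the sphere only if the sphere is on the concave side; a sphere below the graph is on the convex side and yields an \emph{upper} curvature bound at the contact point. Moreover, once $t>0$ the contact is lost and the sphere no longer sees the moving normal-$p$ point.

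The paper closes the gap differently. After normalising so that $Du_0(x_0)=0$, it constructs a local barrier: a strictly convex solution $u_b$ of the same equation on $B_1(0)\times[0,T_b]$ with boundary data $b(x,t)=u_0(x)-\epsilon(|x|-\tfrac12)_+^4+\sqrt{1+|Du_0|^2}\,H[u_0]^\rho\,t$, chosen so that compatibility holds and $\dot u_b\geq\delta>0$. This $u_b$ lies below all approximations and forces the displacement of the normal-$p$ point, $\psi(t):=u(\phi(t),t)$ with $Du(\phi(t),t)=0$, to satisfy $\psi(T_b)-\psi(0)\geq\delta T_b$. Since $\dot\psi(t)=F$ at the normal-$p$ point, a pigeonhole argument against the Harnack profile finishes: if one had $\dot\psi(t)<\tfrac12\,\tfrac{\psi(T_b)-\psi(0)}{\rho+1}\,T_b^{-1/(\rho+1)}t^{-\rho/(\rho+1)}$ for all $t\in(0,T_b]$, integrating would give $\psi(T_b)-\psi(0)<\tfrac12(\psi(T_b)-\psi(0))$, a contradiction. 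Hence at some $t\in(0,T_b]$ the opposite inequality holds, and then \eqref{eq:harnack} transports this to $T_b$ and on to $t_x$, uniformly in the approximation. This integrated-displacement-plus-pigeonhole step is the idea your proposal is missing.
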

\begin{proof}
Without changing the notation, let us assume that the approximating hypersurfaces are locally represented as $\graph u$. It is easy to see that the estimate survives the limiting process. Let us assume for simplicity that $t_x = T$.\par
Fix $x_0 \in \R^n$. By a rigid motion, we may assume without loss of generality that $Du_0(x_0) = 0$ and $x_0 = 0$ and that $u_0$ is convex and defined in $B_3(0)$. Define for $\epsilon > 0$, $b(x,t) := u_0(x) - \epsilon\left(\vert x \vert - \frac{1}{2}\right)_+^4+\sqrt{1 + \vert Du_0 \vert^2} H[u_0]^\rho \cdot t$, where $H[u_0]$ denotes the mean curvature of $\graph u_0$. Fix $\epsilon > 0$ such that $b(\cdot,t)$ is strictly convex in $B_2(0)$. Consider the boundary value problem
\begin{align*}
\begin{cases}
  \dot{u} = \sqrt{1 + \vert Du \vert^2} H^\rho,  & \text{in } B_1(0) \times [0,T],\\
  u = b & \text{on } (\partial B_1(0) \times [0,T]) \cup (B_1(0) \times \{0\}).
\end{cases}
\end{align*}
This is a well-posed boundary value problem as $b$ is chosen so that compatibility conditions at the boundary are fulfilled. Hence there exists $\gamma > 0$, $T_b > 0$, we may assume that $T_b < T$, and a solution $u_b \in C^{2,\gamma;1,\gamma/2}\left(\overline{B_1(0)} \times [0,T_b]\right)$, depending only on $u_0$ in $B_2(x_0)$ and $\epsilon$. We may choose $T_b > 0$ smaller to ensure that there exists $\delta > 0$ such that $\dot{u} \geq \delta > 0$ and that $\graph u_b(\cdot,t)|_{\partial B_1(0)} \,\cap \graph u_0 = \emptyset$ for $t \in [0,T_b]$. Hence $u_b$ acts as a barrier form below.\par
Let $\phi(t)$ denote the point such that $Du(\phi(t),t) = 0$. Observe that $\phi(0) = 0$. Assume without loss of generality that $\phi$ is defined on $[0,T_b]$.  We have $u(\phi(T_b),t) - u(0,0) \geq \delta T_b$. For strictly convex solutions $u$, $\phi$ is differentiable. Moreover, $t \mapsto \psi(t) \equiv u(\phi(t),t)$ is monotone. We claim that
\begin{align*}
\dot{\psi}(t) \geq \frac{1}{2}\frac{\psi(T_b) - \psi(0)}{\rho + 1}T_b^{-\frac{1}{\rho + 1}} t^{-\frac{\rho}{\rho + 1}}
\end{align*}
for some $t \in [0,T_b]$.
Then the Harnack inequality \eqref{eq:harnack} implies that
\begin{align*}
\dot{\psi}(T_b) = \frac{\psi(T_b) - \psi(0)}{\rho + 1}\frac{1}{T_b}
\end{align*}
and the Lemma follows from \eqref{eq:harnack}, applied to $0 < T_b < T$. Otherwise, we get a contradiction as then
\begin{align*}
\psi(T_b) - \psi(0) =&\, \int_0^{T_b} \dot{\psi}(t) dt \leq \frac{\psi(T_b) - \psi(0)}{\rho + 1} T_b^{-\frac{1}{\rho + 1}} \int_0^{T_b} t^{-\frac{\rho}{\rho + 1}} dt\\ =&\, \frac{1}{2} \left(\psi(T_b) - \psi(0)\right).
\end{align*}
The bounds can be chosen in a continuous way as a similar argument gives a bound near $x_0$. 
\end{proof}

\section{Longtime Existence}\label{sc:longtime}

\begin{proof}[Proof of Theorem \ref{th:existence}:]
We may assume that $u(x) \rightarrow \infty$ as $\vert x \vert \rightarrow \infty$. We approximate $\graph u_0$ by a sequence of closed strictly convex $C^{2,\beta}$-hypersurfaces $M^k$ above $\graph u_0$ such that $M^k$ and $\graph u_0$ coincide in $\{x^{n+1} < k\}$. 
According to Schulze \cite{fs:evolution}, for each $k$, there exists a smooth strictly convex solution $M_t^k$ to \eqref{eq:mean} with $M_0^k = M^k$ which contracts to a point in finite time. The approximations $M^k$ can be chosen such that there exist functions $u^k$ fulfilling $M_t^k = \graph u^k (\cdot,t)$ in $\{x^{n+1} < k\}$ for $0 \leq t < k$. Using large spheres as barriers, it is easy to see that the constructions in Section \ref{sc:coordinate}, especially Lemma \ref{lm:comparisonII}, work for the approximating hypersurfaces $M_t^k$ if $k$ is sufficiently large. In particular, they imply locally uniform $C^1$-estimates. Fix $R,T > 0$. According to Lemma \ref{lm:coordinate}, we can choose coordinate systems that allow for applying the local $C^2$-estimates of Section \ref{sc:local}. We obtain $C^2$-estimates on $\partial B_r(0)$ for some $r \geq R$: $$0\cdot \delta_{ij} \leq u_{ij}^k \leq c \cdot \delta_{ij} \text{ on } \partial B_r(0) \times [0,T].$$ \par Inside $B_r(0)$, we use \cite{fs:evolution} again to get similar estimates. As lower velocity bounds follow from Section \ref{sc:lower}, we see that \eqref{eq:mean} is a strictly parabolic equation.  Applying Schauder theory  and estimates by Krylov-Safonov \cite{os:pde} we achieve higher regularity. Using the Arcel\`a-Ascoli theorem, we find a subsequence $u^{k_l}$ which converges to a solution of \eqref{eq:mean}. For $t > 0$, the subsequence converges smoothly.
\end{proof}

\bibliographystyle{amsplain} 
\def\weg#1{} \def\unterstrich{\underline{\rule{1ex}{0ex}}} \def\cprime{$'$}
  \def\cprime{$'$} \def\cprime{$'$} \def\cprime{$'$}
\providecommand{\bysame}{\leavevmode\hbox to3em{\hrulefill}\thinspace}
\providecommand{\MR}{\relax\ifhmode\unskip\space\fi MR }
\providecommand{\MRhref}[2]{%
  \href{http://www.ams.org/mathscinet-getitem?mr=#1}{#2}
}
\providecommand{\href}[2]{#2}

\end{document}